\newcommand{\eql}{\kern-1ex &=& \kern-1ex}
\newcommand{\R}{\mathbb{R}}
\newcommand{\C}{\mathbb{C}}
\newcommand{\Ker}{\mathop\mathrm{Ker}\nolimits}
\newcommand{\Ldeg}{L\hbox{-}\mathrm{deg}}
\newcommand{\sign}{\mathop\mathrm{sign}\nolimits}
\newcommand{\w}{\mathbf{w}}
\newcommand{\Pl}{\mathcal P_{\!L}}
\newcommand{\Pt}{\mathcal P_{T}}
\newcommand{\Psil}{\Psi_{\!L}}
\newcommand{\Psit}{\Psi_{T}}
\newcommand{\e}{\varepsilon}
\newcommand{\0}{\mathbf{0}}
\newcommand{\s}{\sigma}
\newcommand{\g}{\gamma}
\renewcommand{\O}{\Omega}
\newcommand{\D}{\mathbf{D}}
\newcommand{\per}{\!\times\!}
\renewcommand{\L}{\mathcal{L}}
\renewcommand{\S}{\mathbf{S}}
\renewcommand{\v}{\mathbf{v}}
\renewcommand{\Im}{\mathop\mathrm{Img}\nolimits}
\renewcommand{\d}{\delta}
\renewcommand{\l}{\lambda}
\renewcommand{\P}{\mathcal P}
\theoremstyle{plain}
\newtheorem{theorem}{Theorem}[section]
\newtheorem{corollary}[theorem]{Corollary}
\newtheorem{lemma}[theorem]{Lemma}
\newtheorem{conjecture}[theorem]{Conjecture}
\newtheorem{remark}[theorem]{Remark}
\theoremstyle{definition}
\newtheorem{notation}[theorem]{Notation}
\newtheorem{definition}[theorem]{Definition}
\newtheorem{example}[theorem]{Example}
\numberwithin{equation}{section}
\begin{document}

\title[Brouwer degree associated to classical eigenvalue problems]{The Brouwer degree associated to classical eigenvalue problems and applications to nonlinear spectral theory}

\author[P.\ Benevieri]{Pierluigi Benevieri}
\author[A.\ Calamai]{Alessandro Calamai}
\author[M.\ Furi]{Massimo Furi}
\author[M.P.\ Pera]{Maria Patrizia Pera}
\thanks{A.\ Calamai is partially supported by G.N.A.M.P.A.\ - INdAM (Italy)}

\thanks
{The first, second and fourth authors are members of the Gruppo Nazionale per l'Analisi Ma\-te\-ma\-ti\-ca, la Probabilit\`a e le loro Applicazioni (GNAMPA) of the Istituto Nazionale di Alta Ma\-te\-ma\-ti\-ca (INdAM)}

\date{\today}

\address{Pierluigi Benevieri -
Instituto de Matem\'atica e Estat\'istica,
Universidade de S\~ao Paulo,
Rua do Mat\~ao 1010,
S\~ao Paulo - SP - Brasil - CEP 05508-090 -
 {\it E-mail address: \tt
pluigi@ime.usp.br}}
\address{Alessandro Calamai -
Dipartimento di Ingegneria Civile, Edile e Architettura,
Universit\`a Politecnica delle Marche,
Via Brecce Bianche,
I-60131 Ancona, Italy -
 {\it E-mail address: \tt
calamai@dipmat.univpm.it}}
\address{Massimo Furi - Dipartimento di Matematica e Informatica
``Ulisse Dini'',
Uni\-ver\-sit\`a degli Studi di Firenze,
Via S.\ Marta 3, I-50139 Florence, Italy -
{\it E-mail address: \tt
massimo.furi@unifi.it}}
\address{Maria Patrizia Pera - Dipartimento di Matematica e Informatica ``Ulisse Dini'',
Universit\`a degli Studi di Firenze,
Via S.\ Marta 3, I-50139 Florence, Italy -
{\it E-mail address: \tt
mpatrizia.pera@unifi.it}}

\begin{abstract}
Thanks to a connection between two completely different topics, the classical eigenvalue problem in a finite dimensional real vector space and the Brouwer degree for maps between oriented differentiable real manifolds, we were able to solve, at least in the finite dimensional context, a conjecture regarding global continuation in nonlinear spectral theory that we formulated in some recent papers.
The infinite dimensional case seems nontrivial, and is still unsolved.
\end{abstract}

\keywords{eigenvalues, eigenvectors, nonlinear spectral theory, degree theory}

\subjclass[2010]{47J10, 47A75, 55M25}

\dedicatory{Dedicated to the memory of the outstanding mathematician Andrzej Granas,\\ whose contribution to non-linear analysis has deeply inspired our research}

\maketitle


\section{Introduction}
\label{Introduction}

Consider the \textit{nonlinear eigenvalue problem}
\begin{equation}
\label{problem-intro}
\begin{cases}
\;L\v + s N(\v) = \l \v,\\[.3ex]
\;\v \in \S,
\end{cases}
\end{equation}
where $s$ and $\l$ are real parameters, $L \colon \R^k \to \R^k$ is a linear operator, and $N\colon \S \to \R^k$ is a continuous map defined on the unit sphere of $\R^k$.

One can regard \eqref{problem-intro} as a nonlinear perturbation of the classical eigenvalue problem $L\v = \l\v$, $\v \not= 0$.

By a \emph{solution} of \eqref{problem-intro} we mean any triple $(s,\l,\v) \in \R\per\R\per\S$ which verifies the system.
Solution triples having $s=0$ are called \emph{trivial}.
They correspond to pairs $(\l,\v)$, hereafter called \emph{eigenpoints (of $L$)}, in which $\l \in \R$ is an eigenvalue of $L$ and $\v \in \S$ is one of the associated unit eigenvectors.

Let $\Sigma \subset \R\per\R\per\S$ denote the set of the solutions of \eqref{problem-intro}.
In a recent paper \cite{BeCaFuPe-s3} we proved that \emph{if $\l_* \in \R$ is a simple eigenvalue of $L$ and $\v_*$ is one of the two corresponding unit eigenvectors, then the connected component of $\Sigma$ containing $z_* = (0,\l_*,\v_*)$ is either unbounded or includes a trivial solution $z^* = (0,\l^*,\v^*)$ different from $z_*$.}
Thus, in the second alternative, the eigenvector $\v^*$ must be different from $\v_*$.
Although, according to the statement, one could have $\l_* = \l^*$ and, necessarily, $\v^* = -\v_*$.

Denote by $\mathcal E \subset \R^2$ the projection of $\Sigma$ into the $s\l$-plane.
In \cite{BeCaFuPe-s2} we proved that \emph{if $\l_*$ is a simple eigenvalue of $L$, then the connected component of $\mathcal E$ containing $(0,\l_*)$ is either unbounded or includes a pair $(0,\l^*)$ with $\l^* \not= \l_*$.}

In \cite{BeCaFuPe-s3}, supported by this fact, as well as by the above result regarding $\Sigma$, we formulated the following conjecture that, because of the inadequateness of the topological tools utilized in that article, we were not able to prove or disprove.

\begin{conjecture}
\label{conjecture}
Let $\v_* \in \S$ be a unit eigenvector of $L$ corresponding to a simple eigenvalue $\l_*$.
Then, the connected component of\, $\Sigma$ containing $(0,\l_*,\v_*)$ is either unbounded or includes a triple $(0,\l^*,\v^*)$ with $\l^* \not= \l_*$.
\end{conjecture}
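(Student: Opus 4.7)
The strategy is by contradiction. Assume the connected component $C$ of $\Sigma$ containing $z_* = (0,\l_*,\v_*)$ is bounded and contains no triple $(0,\l^*,\v^*)$ with $\l^* \neq \l_*$. Since $\l_*$ is simple, the only eigenpoints of $L$ with eigenvalue $\l_*$ are $(\l_*,\pm\v_*)$, so the trivial solutions that lie in $C$ are contained in $\{z_*, z_*^-\}$, where $z_*^- := (0,\l_*,-\v_*)$. Our goal is to produce a nonvanishing topological invariant of $C$ that can be contradicted by pushing to large $|s|$.

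The key new tool, announced in the title, is the Brouwer degree of the \emph{eigenvalue map}
\[
F_L \colon \R \per \S \to \R^k, \qquad F_L(\l,\v) := L\v - \l\v,
\]
viewed as a map between oriented differentiable $k$-manifolds; its zero set is exactly the set of eigenpoints of $L$. At an eigenpoint $(\l_0,\v_0)$ arising from a simple eigenvalue $\l_0$ the differential $dF_L(\l_0,\v_0)$ is an isomorphism---the simplicity of $\l_0$ yields the decomposition $\R^k = \R\v_0 \oplus \Im(L - \l_0 I)$---so the local degree is $\pm 1$. The heart of the argument is the sign computation at the two antipodal eigenpoints $(\l_0,\v_0)$ and $(\l_0,-\v_0)$: by a direct orientation check (which one may first simplify by a homotopy of $L$ preserving the block decomposition $\R\v_0 \oplus \Im(L - \l_0 I)$), these two local degrees turn out to be \emph{equal}, not opposite. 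The mechanism is that the orientation-reversing contribution of the antipodal map on $\S$ (of degree $(-1)^k$) is exactly compensated by the sign flip of $F_L$ along the $\R\v_0$-direction at $-\v_0$.

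Armed with this computation, a standard Whyburn-type separation argument produces a bounded open neighborhood $W \subset \R \per \R \per \S$ of $C$ with $\partial W \cap \Sigma = \emptyset$ and such that the only trivial solutions meeting $W$ are $z_*$ and, if it lies in $C$, $z_*^-$. By the excision and additivity of Brouwer degree, the degree of $F_L$ at $0$ on the slice $W_0 := \{(\l,\v) : (0,\l,\v) \in W\}$ equals the sum of the local degrees at $(\l_*,\v_*)$ and, when the second eigenpoint is present, at $(\l_*,-\v_*)$; hence it lies in $\{\pm 1, \pm 2\}$ and is in particular nonzero. A Rabinowitz-type global continuation then closes the argument: setting $\Psi_s(\l,\v) := L\v + sN(\v) - \l\v$ and $W_s := \{(\l,\v) : (s,\l,\v) \in W\}$, the condition $\partial W \cap \Sigma = \emptyset$ ensures that $\deg(\Psi_s, W_s, 0)$ is independent of $s$ by homotopy invariance, while the boundedness of $W$ forces $W_s = \emptyset$ for $|s|$ large and hence the degree to vanish---contradicting its nonzero value at $s = 0$.

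I expect the main obstacle to be precisely the orientation-sensitive sign computation identifying the two local degrees of $F_L$ at $(\l_0,\pm\v_0)$. This is exactly the ingredient missing from \cite{BeCaFuPe-s2,BeCaFuPe-s3}: without an oriented-degree framework one could only force $C$ to reach \emph{some} other trivial solution---possibly just the antipode $z_*^-$---rather than one corresponding to a genuinely different eigenvalue.
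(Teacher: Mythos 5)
Your proposal is correct and follows essentially the same route as the paper: the crucial fact that the two local degrees at the twin eigenpoints $(\l_*,\pm\v_*)$ are \emph{equal} and nonzero (which the paper obtains by showing each equals one-half of the sign-jump of the characteristic polynomial at $\l_*$, Lemma \ref{L-degree simple eigenpoint}), combined with a Kuratowski--Whyburn separation (performed after first deleting the unwanted trivial solutions from the ambient space, as in Corollary \ref{continuation 2}) and the generalized homotopy invariance in $s$, which forces the total degree over the $0$-slice of a bounded component to vanish (Theorems \ref{continuation 1}--\ref{bounded component}). The one point to tighten is your heuristic for the sign cancellation: the factor compensating the degree $(-1)^k$ of the antipodal map is $\sign(-I_{\R^k})=(-1)^k$, coming from the full identity $L(-\v)-\l(-\v)=-(L\v-\l\v)$ on all of $\R^k$, not a single sign flip along the $\R\v_*$-direction (which alone would give $(-1)^{k+1}$ and hence the wrong conclusion for even $k$); with that bookkeeping the computation closes exactly as you predict.
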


The purpose of this paper is to obtain a Rabinowitz-type continuation result, Theorem \ref{more than ex conjecture}, which implies the positive answer to the above conjecture, and whose proof is now possible thanks to a special link (which, as far as we know, seems new in the literature) between two completely different topics in mathematics, one belonging to algebra and one to topology: namely, classical eigenvalue problems and Brouwer degree theory.

As we shall see, this link depends on the fact that the eigenpoints of $L$ are the zeros of the $C^\infty$ map
\[
\Psil \colon \R\per\S \to \R^k, \quad (\l,\v) \mapsto L\v-\l\v,
\]
whose domain, the cylinder $\R\per\S$, is an orientable, $k$-dimensional, smooth manifold embedded in $\R\per\R^k$.

Thanks to the Brouwer degree, once one of the two possible orientations of the cylinder $\R\per\S$ has been chosen (as we shall see one of them may be considered ``natural''), the map $\Psil$ allows us to assign an integer, $1$ or $-1$, to any eigenpoint $p_* = (\l_*,\v_*)$ of $L$ in which $\l_*$ is a simple eigenvalue.
This integer, which we denote by $\Ldeg(p_*)$ and call \emph{\hbox{$L$-degree} of $p_*$}, can merely be detected by observing the characteristic polynomial of $L$ (Theorem \ref{L-degree isolated eigenpoint}),
and has two crucial properties from which the positive answer to the Conjecture \ref{conjecture} springs.

The first one is a consequence of Theorem \ref{bounded component}:
\emph{If all the trivial solutions contained in a bounded connected component of $\Sigma$ correspond to simple eigenvalues, then the sum of the \hbox{$L$-degrees} of the associated eigenpoints is zero}.

The second one states that the (nonzero) \hbox{$L$-degree} of an eigenpoint $p_* = (\l_*,\v_*)$, corresponding to a simple eigenvalue $\l_*$, is the same as that of its ``twin brother'' $\bar p_* = (\l_*,-\v_*)$; as it should be, since the characteristic polynomial of $L$ ignores which one of the two unit eigenvectors, $\v_*$ or $-\v_*$, one considers.

These two facts imply that the connected component of $\Sigma$ containing the trivial solution $(0,\l_*,\v_*)$, if bounded, must include at least another trivial solution different from $(0,\l_*,-\v_*)$.

\medskip
A pioneer work regarding the persistence of the unit eigenvectors of a perturbed eigenvalue problem is due to R.\ Chiappinelli \cite{Chi}, who examined a problem like \eqref{problem-intro} in the context of a real Hilbert space $H$ (instead of merely $\R^k$), obtaining the so-called \emph{local persistence property} of a unit eigenvector $\v_*$ and the corresponding eigenvalue $\l_*$.
Namely, under the assumptions that $L\colon H \to H$ is a self-adjoint bounded operator, that $\l_* \in \R$ is a simple isolated eigenvalue of $L$, and that $N\colon \S \to H$ is a Lipschitz continuous map defined on the unit sphere of $H$, he proved the existence of two Lipschitz functions, $\e \mapsto \v_\e \in \S$ and $\e \mapsto \l_\e \in \R$, defined on neighborhood $(-\d,\d)$ of $0 \in \R$ and satisfying the following properties:
\[
 \v_0=\v_*,\; \l_0=\l_* \quad \text{and} \quad L\v_\e + \e N(\v_\e) = \l_\e \v_\e,\; \forall\, \e \in (-\d,\d).
\]
Nonlinear eigenvalue problems are related to \emph{nonlinear spectral theory} (see \cite{ADV})
and find applications to differential equations (see e.g.\ the recent survey \cite{Chi2018} and references therein).

Further results regarding the local persistence of eigenvalues, as well as unit eigenvectors, have been obtained in \cite{BeCaFuPe-s1, Chi2017, ChiFuPe1, ChiFuPe2, ChiFuPe3, ChiFuPe4} also in the case in which the eigenvector $\l_*$ is not necessarily simple.
In this framework, a natural question arose: can one prove a sort of ``global persistence'' of eigenvalues and unit eigenvectors?
This was the object of the recent papers \cite{BeCaFuPe-s2, BeCaFuPe-s3, BeCaFuPe-s4}.
As it is of this one; in fact, our main result (Theorem \ref{more than ex conjecture}) ensures the ``global persistence'' of a trivial solution $(0,\l_*,\v_*)$ under the weak assumption that the algebraic multiplicity of the eigenvalue $\l_*$ is odd.
Example \ref{example5} shows that, in Theorem \ref{more than ex conjecture}, the oddness of the algebraic multiplicity of $\l_*$ is crucial.

A possible extension of our main result to the context of real Hilbert spaces seems reasonable, although nontrivial.
A byproduct of the extension would be the positive solution to a question posed in \cite{BeCaFuPe-s4}, which is analogous to Conjecture \ref{conjecture}, but in the infinite dimensional context.
This will be the subject of future investigations.

\section{Notation and preliminaries}
\label{Preliminaries}

Here we introduce some notation, some terminology, and some (more or less) known concepts the we shall need later.

\subsection{Notation and terminology}
Let $E$ and $F$ be two finite dimensional real vector spaces.
By $\L(E,F)$ we shall mean the vector space of the linear operators from $E$ into $F$.
The space $\L(E,E)$ will simply be represented by the symbol $\L(E)$.

The standard inner product of two vectors $\v,\w \in \R^k$ will be denoted by $\langle \v, \w \rangle$.
Thus, the Euclidean norm of an element $\v \in \R^k$ is $\|\v\| =\sqrt{\langle \v, \v \rangle}$.

\smallskip
Sometimes, for the sake of simplicity, we shall use the same symbol for a function and for its restriction to a different domain or codomain, or both of them. In this case, however, the new sets will be evident from the context.
Observe that, according to the formal definition of a function as a triple of sets, domain, codomain and graph, the first set (and, consequently, the third) may be empty.

\begin{notation}
\label{partial map}
Let $X$, $Y$ and $Z$ be metric spaces, $f\colon X \per Y \to Z$ a continuous map, and $x$ a given element of $X$.
By $f_x \colon Y \to Z$ we shall denote the map $y \mapsto f(x,y)$, called the \emph{partial map of $f$ at $x$}.
Moreover, if $D$ is a subset $X \per Y$, by $D_x$ we shall mean the set
$D_x = \{y \in Y: (x,y) \in D\}$, called \emph{$x$-slice of $D$}.
In this case, by $f_x \colon D_x \to Z$ we shall denote the \emph{partial map of $f$ at $x$, relative to $D$}; that is, the restriction to the (possibly empty) slice $D_x$ of the above partial map $f_x\colon Y \to Z$.
If no set $D$ is mentioned, we shall assume $D = X \per Y$, so that $D_x = Y$.
\end{notation}

Hereafter, the boundary and the closure of a subset $A$ of a metric space $X$ will be denoted by $\partial A$ and $\bar A$, respectively.

\begin{definition}
\label{isolated set}
Let $X$ be a locally compact metric space, $C$ a closed subset of $X$, and $K$ a compact subset of $C$. Given an open subset $U$ of $X$, we shall say that \emph{$U$ isolates $K$ (among $C$)} or that \emph{$U$ is an isolating neighborhood of $K$ (among $C$}), if
\begin{itemize}
\item
$\overline U$ is compact;
\item
$U \cap C = K$;
\item
$\partial U \cap C = \emptyset$.
\end{itemize}
In this case, $K$, apart of being compact, is open in $C$.
Notice that any such a subset of $C$ can be isolated by a convenient neighborhood.
In this case we shall say that it is an \emph{isolated subset of $C$}.
\end{definition}

\subsection{Orientation of a finite-dimensional real vector space}
Let $E$ be a finite-dimensional real vector space.
We recall that an \emph{orientation} of $E$ is one of the two equivalence classes of ordered bases of $E$, where
two ordered bases are equivalent if the linear transformation that takes one onto the other has positive determinant.
The space $E$ is \emph{oriented} when one of the two classes, say $\mathcal B$, has been chosen.
In this case an ordered basis is said to be \emph{positive} if it belongs to $\mathcal B$ and \emph{negative} otherwise.
For example, the standard (ordered) basis of $\R^k$ is positive for the \emph{standard orientation} of $\R^k$.

Observe that if a (finite dimensional real) vector space $E$ is a direct sum $E_1 \oplus E_2$ of two oriented subspaces, then, an orientation of $E$, called \emph{compatible (with the splitting)}, may be obtained by the (ordered) union of two positive bases, the first of $E_1$ and the second of $E_2$.
Analogously, if $E_1$ and $E_2$ are oriented spaces, so are (in a natural way) the subspaces $\bar E_1 = E_1 \per \{\0\}$ and $\bar E_2 = \{\0\} \per E_2$ of the product $E_1 \per E_2$. Since $E_1 \per E_2 = \bar E_1 \oplus \bar E_2$, the above argument shows that the orientations of $E_1$ and $E_2$ determine a \emph{compatible orientation} of the product $E_1 \per E_2$.
From this facts one can deduce the following
\begin{remark}
\label{compatible orientation}
Let $E_1$ and $E_2$ be two finite-dimensional real vector spaces. Then, the orientations of two of the three spaces $E_1$, $E_2$ and $E_1 \per E_2$ determine a compatible orientation on the third one. The same holds when the three spaces are $E_1$, $E_2$ and $E$, with $E = E_1 \oplus E_2$.
\end{remark}

Let $L \colon E \to F$ be an isomorphism between oriented finite-dimensional real vector spaces.
The operator $L$ is said to be \emph{orientation preserving [reversing]} if it transforms positive bases of $E$ into positive [negative] bases of $F$.
The \emph{sign} of $L$, $\sign(L)$, is $+1$ if $L$ is orientation preserving, and $-1$ if it is orientation reversing.
Notice that, if $F=E$, then, no matter what is the orientation of $E$, $\sign(L)$ and $\det(L)$ are canonically defined and $\sign(L) =\sign(\det(L))$.
\begin{remark}
\label{product of isomorphism}
Let $L_1\colon E_1 \to F_1$ and $L_2\colon E_2 \to F_2$ be two isomorphisms between oriented, finite dimensional, real vector spaces.
Then, the linear operator
\[
L_1 \per L_2\colon E_1 \per E_2 \to F_1 \per F_2, \quad (\v_1,\v_2) \mapsto (L_1\v_1,L_2\v_2)
\]
is orientation preserving if and only if $L_1$ and $L_2$ are both orientation preserving or both orientation reversing.
More explicitly one has
\[
\sign(L_1 \per L_2) = \sign(L_1) \sign(L_2).
\]
An analogous assertion holds when $F_1 \per F_2$ is replaced by $F = F_1 \oplus F_2$.
\end{remark}

\subsection{Elementary notions on Differentiable Topology}
By a \emph{(differentiable) manifold} we shall mean a smooth (i.e.\ of class $C\sp{\infty}$), boundaryless, real differentiable manifold, embedded in some Euclidean space.

Given a manifold $\mathcal M$ and given $p \in \mathcal M$, the tangent space of $\mathcal M$ at $p$ will be denoted by $T_p(\mathcal M)$.
An \emph{orientation} on $\mathcal M$ is a ``continuous'' map $\omega$, which to any $p \in \mathcal M$ associates an orientation $\omega(p)$ of $T_p(\mathcal M)$. We refer to \cite{Mi} for details regarding the notion of oriented manifolds.

If $f\colon \mathcal M \to \mathcal N$ is a $C\sp{1}$ map between two manifolds and $p \in \mathcal M$, the differential of $f$ at $p$ will be written as $df_p$.
This is a linear operator from $T_p(\mathcal M)$ into $T_{f(p)}(\mathcal N)$.

Remember that, if $f\colon \mathcal M\to \mathcal N$ is a $C\sp{1}$ map, an element $p\in \mathcal M$ is said to be a \emph{regular point} (of $f$) if $df_p$ is surjective.
Non-regular points are called \emph{critical (points)}.
The \emph{critical values} of $f$ are the elements of the target manifold $\mathcal N$ which lie in the image $f(C)$ of the set $C$ of critical points.
Any $q\in \mathcal N$ which is not in $f(C)$ is a \emph{regular value}.
Therefore, in particular, any element of $\mathcal N$ which is not in the image of $f$ is a regular value.

The well-known Sard's Lemma implies that
\emph{the set of regular values of a smooth map $f\colon \mathcal M\to \mathcal N$ between two manifolds is dense in $\mathcal N$.}

\medskip
We recall the following result (see e.g.\ \cite{Mi}).

\begin{theorem}[Regularity of the level set]
\label{Regularity-of-level-set}
Let $f \colon \mathcal M \to \mathcal N$ be a smooth map between two manifolds of dimensions $m$ and $n$, respectively.

If $q \in \mathcal N$ is a regular value for $f$, then $f\sp{-1}(q)$, if nonempty, is a manifold of dimension $m-n$.
Moreover, given $p\in f\sp{-1}(q)$, one has $T_p (f\sp{-1}(q)) = \Ker df_p$.
\end{theorem}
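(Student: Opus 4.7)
The plan is to reduce the global statement to a local one around each $p \in f^{-1}(q)$ and then invoke the submersion form of the Implicit Function Theorem. Since $\mathcal M$ and $\mathcal N$ are, by our convention, smoothly embedded in Euclidean spaces, at $p$ and at $q=f(p)$ I may choose smooth local parametrizations $\varphi\colon U \to \mathcal M$ with $\varphi(0)=p$ and $\psi\colon V \to \mathcal N$ with $\psi(0)=q$, where $U \subseteq \R^m$ and $V \subseteq \R^n$ are open neighborhoods of the origin. Setting $F = \psi^{-1}\circ f \circ \varphi$, the hypothesis that $q$ is a regular value translates, via the chain rule, into the fact that $dF_0 \colon \R^m \to \R^n$ is surjective; in particular $m \ge n$.

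Next, I would apply the Implicit Function Theorem to $F$ at $0$. After a linear change of coordinates in $\R^m$, one may assume that the restriction of $dF_0$ to the subspace $\R^n \per \{\0\}$ is an isomorphism onto $\R^n$. The Implicit Function Theorem then yields, on a sufficiently small neighborhood of $0$, a unique smooth function $g$ defined on a neighborhood of $\0 \in \R^{m-n}$ such that the equation $F(x_1,x_2)=0$ with $x_1 \in \R^n$ and $x_2 \in \R^{m-n}$ is locally equivalent to $x_1 = g(x_2)$. Consequently, in a neighborhood of $p$, the set $f^{-1}(q)$ coincides with the image of the smooth embedding $x_2 \mapsto \varphi(g(x_2),x_2)$, which exhibits a neighborhood of $p$ in $f^{-1}(q)$ as a smooth $(m-n)$-dimensional submanifold of the ambient Euclidean space in which $\mathcal M$ sits.

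For the tangent space identification, one inclusion is immediate: any smooth curve $c\colon (-\e,\e) \to f^{-1}(q)$ with $c(0)=p$ satisfies $f\circ c \equiv q$, hence differentiation at $0$ gives $df_p(c'(0)) = 0$, showing $T_p(f^{-1}(q)) \subseteq \Ker df_p$. The reverse inclusion follows by a dimension count: $\dim T_p(f^{-1}(q)) = m-n$ by the first part of the theorem, whereas $\dim \Ker df_p = m-n$ by the rank-nullity theorem applied to the surjection $df_p\colon T_p(\mathcal M) \to T_q(\mathcal N)$.

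I do not expect any deep obstacle here; the main care is bookkeeping. One has to verify that the local parametrizations $x_2 \mapsto \varphi(g(x_2),x_2)$ produced at different points of $f^{-1}(q)$ are mutually smoothly compatible, so that they genuinely endow $f^{-1}(q)$ with the structure of a smooth embedded submanifold rather than merely a collection of smooth local pieces. Since both $\varphi$ and $g$ are smooth and the transition maps between overlapping charts of $\mathcal M$ are smooth diffeomorphisms, this compatibility is automatic, and the identification $T_p(f^{-1}(q))=\Ker df_p$ is intrinsic and independent of the choices made.
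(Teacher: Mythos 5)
Your proof is correct. The paper does not actually prove this statement --- it recalls it as a classical result with a reference to Milnor --- and your argument is the standard one: localize, apply the Implicit Function Theorem to exhibit the level set as a graph, then identify the tangent space by the curve inclusion $T_p(f^{-1}(q)) \subseteq \Ker df_p$ together with the dimension count from rank--nullity. This is essentially the same local-linearization proof found in the cited reference, so nothing further is needed.
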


\subsection{The Brouwer degree for maps between oriented manifolds}
We will use a convenient extension, which belongs to the folklore, of the classical Brouwer degree for maps between real finite dimensional oriented manifolds of the same dimension.

We recall, first, some basic notions regarding the classical case, in which the domain manifold is compact or, more generally, the map is ``observed'' on a relatively compact open subset of the domain.
More details can be found, for example, in \cite{Hi,Mi,Ni,OR}.

Let $f\colon \mathcal M \to \mathcal N$ be a continuous map between two oriented manifolds of the same dimension.
Given a value $q \in \mathcal N$ and an open subset $U$ of $\mathcal M$, one says that the triple $(f,U,q)$ is \emph{admissible (for the Brouwer degree)} provided that $U$ isolates $U\cap f^{-1}(q)$ among $f^{-1}(q)$; that is, $U$ is relatively compact in $\mathcal M$ and $q \notin f(\partial U)$.

The Brouwer degree is a (special) function that to any admissible triple $(f,U,q)$ assigns an integer, denoted by $\deg(f,U,q)$ and called the \emph{(Brouwer) degree of $f$ in (the observed set) $U$ with target $q$}.
Roughly speaking, $\deg(f,U,q)$ is an algebraic count of the solutions in $U$ of the equation $f(p) = q$.
In fact, one of the properties of this integer valued function is given by the

\medskip\noindent
\textbf{Computation Formula.}
\label{Computation Formula}
If $(f,U,q)$ is admissible, $f$ is smooth, and $q$ is a regular value for $f$ in $U$, then
\[
\deg(f,U,q)\; = \kern -2 ex \sum_{p \in f^{-1}(q) \cap U}\kern -2 ex \sign(df_p).
\]
This formula is actually the basic definition of the Brouwer degree, and the integer associated to any admissible triple $(g,U,r)$ is defined by
\[
\deg(g,U,r) := \deg(f,U,q),
\]
where $f$ and $q$ satisfy the assumptions of the Computation Formula and are, respectively, ``sufficiently close'' to $g$ and $r$.
It is known that this is a well-posed definition.

For other fundamental properties of the degree that we shall use later, such as Additivity, Excision and Homotopy Invariance, we refer to \cite{Ni}.
Here, we just recall the following useful consequence of the Additivity Property:

\medskip\noindent
\textbf{Existence Property.}
\label{Existence Property}
If $(f,U,q)$ is admissible and $\deg(f,U,q)\neq 0$, then $f^{-1}(q)\cap U$ is nonempty.

\medskip
A special important case regarding the Brouwer degree is when $\mathcal M$ is compact and $\mathcal N$ is connected.
In this case (see, for example, \cite{Mi}) the map $q \in \mathcal N \mapsto \deg(f,\mathcal M,q)$ is constant. Thus, taking into account that the observed set is the whole domain of $f$, one can simply write $\deg(f)$ instead of $\deg(f,\mathcal M,q)$.

A more special, and very interesting case (that we will need later), is when $f$ is a selfmap of a not necessarily oriented manifold $\mathcal M$.
In fact, we have the following
\begin{remark}
\label{selfmap}
If $f$ is a selfmap acting on a compact, connected and orientable manifold $M$, then the integer $\deg(f)$ is well defined and does not depend on the chosen orientation of $\mathcal M$ (assuming that it is the same for domain and codomain).
\end{remark}

\smallskip
It belongs to the folklore the fact that the degree may be extended to triples $(f,V,q)$, from now on called \emph{weakly admissible}, in which $V$ is any open subset of $\mathcal M$ with the unique requirement that $f^{-1}(q)\cap V$ is compact.
In this case one puts
\[
\deg_w(f,V,q) := \deg(f,U,q),
\]
where $U$ is any open subset of $V$ which isolates $f^{-1}(q)\cap V$ among $f^{-1}(q)$.

The Excision Property of the classical Brouwer degree shows that this definition is well posed, even if some properties of the classical degree do not hold anymore in the extended domain of the weakly admissible triples. One of these is the continuity of the map $q \in \mathcal N \setminus f(\partial U) \mapsto \deg(f,U,q)$; another one is the interesting Boundary Dependence Property which is valid for the admissible triples when the target manifold is a vector space.

If we identify $\C$ with $\R^2$, a special weakly admissible triple is $(P,\C,q)$, where $P$ is a non-constant complex polynomial and $q \in \C$.
In this case $\deg_w(P,\C,q)$ does not depend on $q$. Thus, one can simply write $\deg_w(P)$ instead of $\deg_w(P,\C,q)$. One can check that this integer, called \emph{topological degree of $P$}, is the same as the algebraic degree.
Consequently, the Existence Property implies the surjectivity of any non-constant polynomial.

In the context of the weakly admissible triples one has the following folk result that we shall need in the sequel.

\begin{lemma}[Generalized Homotopy Invariance]
\label{Generalized Homotopy Invariance}
Let $\mathcal M$ and $\mathcal N$ be two oriented manifolds of the same dimension.
Given an interval $\mathcal J \subseteq \R$, an open subset $W$ of $\mathcal J \per \mathcal M$, a value $q \in \mathcal N$, and a continuous map $H \colon W \to \mathcal N$, suppose that the subset $H^{-1}(q)$ of $W$ is compact.
Then, the function $s \in \mathcal J \mapsto \deg_w(H_s,W_s,q)$ is constant.
\end{lemma}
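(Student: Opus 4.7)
The plan is to reduce to the classical Homotopy Invariance by showing that the function $f\colon s \mapsto \deg_w(H_s,W_s,q)$ is locally constant on $\mathcal J$; since $\mathcal J$ is connected, this will yield the assertion. Well-posedness of $f$ is immediate: setting $K := H^{-1}(q)$, the slice $K_s = H_s^{-1}(q) \cap W_s$ corresponds, via the projection onto $\mathcal M$, to the closed subset $K \cap (\{s\}\per\mathcal M)$ of the compact set $K$, and is therefore compact.

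To prove local constancy at a fixed $s_0 \in \mathcal J$, I would first pick an isolating neighborhood $U_0 \subset W_{s_0}$ of $K_{s_0}$ in the sense of Definition \ref{isolated set}, so that $f(s_0) = \deg(H_{s_0},U_0,q)$ by the very definition of $\deg_w$. The core of the argument then consists in verifying that, for every $s$ in a sufficiently small closed subinterval $J_\delta = [s_0-\delta,s_0+\delta]\cap\mathcal J$, the following three conditions hold: (a) $\overline{U_0} \subset W_s$; (b) $K_s \subset U_0$; (c) $q \notin H_s(\partial U_0)$. Condition (a) follows from a tube-lemma argument applied to the compact set $\{s_0\}\per\overline{U_0}$, which is contained in the open set $W$. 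Condition (b) is obtained by noting that $K \setminus (\mathcal J \per U_0)$ is a compact subset of $\mathcal J \per \mathcal M$ whose projection to $\mathcal J$ is a compact set not containing $s_0$, and is therefore disjoint from some neighborhood of $s_0$. Condition (c) is then automatic: any point of $H_s^{-1}(q) \cap \overline{U_0}$ lies in $K_s$, which by (b) is contained in the open set $U_0$ and so cannot meet $\partial U_0$.

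With these three conditions in force on $J_\delta$, the classical Homotopy Invariance applied to the restriction of $H$ to $J_\delta \per \overline{U_0}$ yields that $s \mapsto \deg(H_s,U_0,q)$ is constant on $J_\delta$. By condition (b) together with the Excision Property of the classical degree, one has $\deg(H_s,U_0,q) = \deg_w(H_s,W_s,q) = f(s)$ for every such $s$, so $f$ is constant on the neighborhood $J_\delta$ of $s_0$, as required. The main technical point is condition (b): preventing zeros of $H_s$ from escaping $U_0$ as $s$ varies requires exploiting the \emph{global} compactness of $K$ as a subset of $W$, not merely the compactness of the individual slice $K_{s_0}$. The remaining items (a) and (c) are essentially bookkeeping with the tube lemma and continuity of $H$.
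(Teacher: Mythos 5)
Your argument is correct and follows essentially the route the paper itself indicates (the paper only sketches it): prove that $s \mapsto \deg_w(H_s,W_s,q)$ is locally constant by combining the definition of the extended degree with the classical Homotopy Invariance Property, and conclude by connectedness of $\mathcal J$. The one point worth making explicit is that the isolating neighborhood $U_0$ must be chosen with $\overline{U_0} \subset W_{s_0}$ (possible since $K_{s_0}$ is compact and $\mathcal M$ is locally compact), so that $\{s_0\}\per\overline{U_0}\subset W$ and your tube-lemma step (a) applies.
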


The easy proof of Lemma \ref{Generalized Homotopy Invariance} can be performed by showing that the integer valued function $s \mapsto \deg_w(H_s,W_s,q)$ is locally constant.
In fact, this property is a straightforward consequence of the definition of the extended degree and the classical Homotopy Invariance Property.

\subsection{On the sign-jump at a simple eigenvalue of an endomorphism}
\begin{definition}
\label{sign-jump}
Given a real polynomial $P$ and a real root $\l_*$ of $P$, the integer
\[
\lim_{\e\to 0^+}\big(\sign P(\l_*+\e)- \sign P(\l_*-\e)\big) \in \{2,-2,0\}
\]
will be called the \emph{sign-jump of $P$ at $\l_*$}.
\end{definition}

In the next sections we will consider the continuous real function
\[
\P\colon \L(\R^k)\per\R \to \R, \quad (L,\l) \mapsto \det(L-\l I),
\]
where $I$ is the identity in $\R^k$.
Therefore, according to Notation \ref{partial map}, given any operator $L \in \L(\R^k)$, the partial map $\Pl$ of $\P$ at $L$ is the real characteristic polynomial of $L$.

\medskip
Recall that $\l_* \in \R$ is a simple eigenvalue of an endomorphism $L \in \L(\R^k)$ if and only if $\Ker(L-\l_*I) = \R\v_*$ for some $\v_* \notin \Im(L-\l_*I)$.
In this case, putting $T = L-\l_*I$, the restriction $\widehat T\colon \Im T \to \Im T$ of $T$ is an automorphism (\emph{associated to $T$}).
Thus, its determinant is well defined and non-zero.
The following result shows that this determinant is positive if and only if the sign-jump at $\l_*$ of characteristic polynomial $\Pl$ is negative.

\begin{lemma}
\label{sign-jump and associated automorphism}
Let $\l_* \in \R$ be a simple eigenvalue of an endomorphism $L \in \L(\R^k)$.
Then the sign-jump at $\l_*$ of $\Pl$ equals $-2\sign(\widehat T)$, where $\widehat T \in \L(\Im T)$ is the automorphism associated to $T = L-\l_*I$.
\end{lemma}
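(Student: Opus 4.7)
The plan is to exploit the direct-sum splitting $\R^k=\R\v_*\oplus\Im T$ induced by the simplicity of $\l_*$, and then read off the sign-jump from the resulting block form of the characteristic polynomial.

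First I would observe that, since $\l_*$ is simple, one has $\Ker T=\R\v_*$ with $\v_*\notin\Im T$, and consequently $\R^k=\R\v_*\oplus\Im T$. Under this decomposition the operator $T=L-\l_*I$ preserves both summands: it kills $\R\v_*$ and, being a selfmap of $\R^k$ with image contained in $\Im T$, it carries $\Im T$ into itself, acting there as the associated automorphism $\widehat T$. In the block form determined by this splitting,
\[
T=\begin{pmatrix}0 & 0\\ 0 & \widehat T\end{pmatrix}.
\]

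Next I would compute the characteristic polynomial. Setting $\mu=\l-\l_*$, so that $L-\l I=T-\mu I$, the same splitting gives
\[
\Pl(\l_*+\mu)=\det(T-\mu I)=\det\!\begin{pmatrix}-\mu & 0\\ 0 & \widehat T-\mu\widehat I\end{pmatrix}=-\mu\cdot\det(\widehat T-\mu\widehat I),
\]
where $\widehat I$ is the identity of $\Im T$. Since $\widehat T$ is an automorphism, $\det\widehat T\neq 0$, and by continuity $\sign\det(\widehat T-\mu\widehat I)=\sign\det(\widehat T)=\sign(\widehat T)$ for $|\mu|$ small enough; recall that the sign of a selfmap is canonically defined and equals the sign of its determinant, as noted in the preliminaries.

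Finally I would assemble the sign-jump. For all sufficiently small $\e>0$,
\[
\sign\Pl(\l_*+\e)=-\sign(\widehat T),\qquad \sign\Pl(\l_*-\e)=+\sign(\widehat T),
\]
so the limit in Definition \ref{sign-jump} equals $-\sign(\widehat T)-\sign(\widehat T)=-2\sign(\widehat T)$, as claimed. The only step requiring any care is the block decomposition: one must verify that $T$ genuinely respects the (non-orthogonal in general) splitting $\R\v_*\oplus\Im T$, which follows at once from $T\v_*=0$ and $T(\R^k)\subseteq\Im T$. Everything else is a continuity argument at $\mu=0$ together with the multiplicativity of $\det$ on block-diagonal operators, so I do not foresee any serious obstacle.
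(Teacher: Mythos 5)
Your argument is correct and follows essentially the same route as the paper's proof: both reduce to the splitting $\R^k=\Ker T\oplus\Im T$, write $T-\mu I$ in block-diagonal form to get $\det(T-\mu I)=-\mu\det(\widehat T-\mu\widehat I)$, and conclude by continuity of the determinant at $\mu=0$. The sign bookkeeping in your final step is also right.
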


\begin{proof}
It is enough to prove that the sign-jump at $\l^*= 0$ of the characteristic polynomial $\Pt$ of $T$ equals $-2\sign(\widehat T)$.
Since the eigenvalue $\l^*=0$ of $T$ is simple, one gets the splitting
$
\R^k = \Ker T \oplus \Im T
$
and the matrix representation
\[
T-\l I=
\left(
\begin{array}{cc}
T_{11}-\l I_{11} & T_{12} \\[2ex]
T_{21} & T_{22}-\l I_{22}
\end{array}
\right)
=
\left(
\begin{array}{cc}
-\l I_{11} & 0 \\[2ex]
0 & \widehat T-\l I_{22}
\end{array}
\right).
\]
Therefore, $\Pt(\l) = \det(T - \l I) = -\l\det(\widehat T-\l I_{22})$.

The assertion now follows recalling the definition of sign-jump and observing that $\det(\widehat T -\l I_{22}) \to \det \widehat T$ as $\l \to 0$.
\end{proof}

\section{The eigenvalue problem and the associated Brouwer degree}
\label{Results 1}

Consider the eigenvalue problem
\begin{equation}
\label{classical eigenvalue problem}
\left\{
\begin{aligned}
&L\v = \l \v,\\
&\v \in \S,
\end{aligned}\right.
\end{equation}
where $\l \in \R$, $L \in \L(\R^k)$, and $\S$ is the unit sphere of $\R^k$.

\medskip
An element $(\l,\v)$ of the cylinder $\R\per\S$ will be called an \emph{eigenpoint of $L$} if it satisfies the equation $L\v = \l\v$.
So that, the second element $\v$ is a \emph{unit eigenvector of $L$} corresponding to the (real) \emph{eigenvalue} $\l$.

The set of the eigenpoints of $L$ will be denoted by $\mathcal S$.
Thus, given any $\l \in \R$, the \emph{$\l$-slice} $\mathcal S_\l = \{\v \in \S: (\l,\v) \in \mathcal S\}$ of $\mathcal S$ coincides with $\S \cap \Ker(L-\l I)$.

 Observe that $\mathcal S_\l$ is nonempty if and only if $\l$ is a real eigenvalue of $L$.
 In this case $\mathcal S_\l$ will be called the \emph{eigensphere of $L$ corresponding to $\l$}.
 In fact, it is a sphere whose dimension equals the geometric multiplicity of $\l$ minus one.
In particular, if $\l$ is simple or, more generally, if its geometric multiplicity is one, then $\mathcal S_\l$ has only two elements: the two unit eigenvectors of $L$, one opposite to the other.

If $\l \in \R$ is an eigenvalue of $L$, the nonempty set $\{\l\} \per \mathcal S_\l$ will be called the \emph{eigenset of $L$ corresponding to $\l$} or, briefly, the \emph{$\l$-eigenset of $L$}.
Notice that the set of all the eigenpoints of $L$ is given by
\begin{equation*}
\mathcal S = \bigcup_{\l \in \R} \{\l\} \per \mathcal S_\l.
\end{equation*}

It is convenient to regard $\R\per\S$ as the subset of the space $\R\per\R^k$ satisfying the equation $g(\l,\v) = 1$, where $g \colon \R\per\R^k \to \R$ is defined by $g(\l,\v) = \langle \v,\v \rangle$.
The differential $dg_p \in \L(\R\per\R^k,\R)$ of $g$ at a point $p = (\l,\v)$ is given by $(\dot\l,\dot\v) \mapsto 2\langle \v,\dot\v \rangle$. Therefore, the set of the critical points of $g$ is the $\l$-axis $\v=0$ and, consequently, the number $1$ is a regular value for $g$. This shows that $\R\per\S$ is a smooth manifold of codimension one in $\R\per\R^k$ and, given any $p = (\l,\v) \in g^{-1}(1)$, the tangent space of $\R\per\S$ at $p$ is the kernel of $dg_p$. Namely,
\[
T_{(\l,\v)}(\R\per\S) = \big\{(\dot\l,\dot\v) \in \R\per\R^k: \langle \v,\dot\v\rangle = 0 \big\} = \R\per\v^\perp = (0,\v)^\perp \subset \R\per\R^k.
\]

Observe that the eigenpoints of $L$ are the zeros of the smooth map
\[
\Psil \colon \R\per\S \to \R^k, \quad (\l,\v) \mapsto L\v-\l\v.
\]
Actually, as we shall see, it is convenient to define
\[
\Psi \colon \L(\R^k)\per\R\per\R^k \to \R^k, \quad (T,\l,\v) \mapsto T\v-\l\v,
\]
so that, according to Notation \ref{partial map}, $\Psil$ may be regarded as the partial map of $\Psi$ at $L$, relative to $D = \L(\R^k)\per\R\per\S$.

\medskip
Since $\R\per\S$ is an orientable real differentiable manifold of the same dimension as the oriented space $\R^k$, once we give an orientation to it, it makes sense to consider the Brouwer degree (with target $\0 \in \R^k$), $\deg(\Psil,U,\0)$, of $\Psil$ on any relatively compact open set $U$ such that $\partial U \cap \Psil^{-1}(\0) = \emptyset$.

Since $\R$ has the standard orientation (given by $1$ as a basis), in order to orient $\R\per\S$, it is sufficient to choose one of the two orientations of $\S$.
We prefer the natural one, induced by regarding $\S$ as the boundary of the oriented unit disk $\D$ of $\R^k$.
Namely, given any $\v \in \S$, as a basis of the one-dimensional subspace $\R\v$ of $\R^k$ we take the unit vector $\nu(\v) = \v$ which points outside $\D$.
Consequently, the orientation of the tangent space $T_\v(\S) = \v^\perp$ is the one compatible with the splitting $\R\v \oplus \v^\perp$ of the oriented space $\R^k$ (see Remark \ref{compatible orientation}).

\medskip
In order to simplify some statements, it is convenient to introduce the following notion of \hbox{$L$-degree}, which is well-posed thanks to the Excision Property of the Brouwer degree.

\begin{definition}
\label{definition of L-degree}
Let $K \subset \R\per\S$ be an isolated set of eigenpoints of an operator $L \in \L(\R^k)$.
By the \emph{\hbox{$L$-degree} of $K$} we mean the integer $\Ldeg(K) = \deg(\Psil,U,\0)$, where $U \subset \R\per\S$ is any isolating neighborhood of $K$ among $\Psil^{-1}(\0)$.
In particular, if $p \in \Psil^{-1}(\0)$ is such that the differential $d(\Psil)_p$ is invertible, then $p$ is isolated among $\Psil^{-1}(\0)$ and $\Ldeg(p)=\sign(d(\Psil)_p)$.
\end{definition}

Let $p_* = (\l_*,\v_*)$ be an eigenpoint of $L \in \L(\R^k)$ corresponding to a simple eigenvalue.
In \cite{BeCaFuPe-s3} it was shown that $\Psil\colon \R\per\S \to \R^k$ maps diffeomorphically a neighborhood of $p_*$ in $\R\per\S$ onto a neighborhood of $\0$ in $\R^k$.
This implies that the \hbox{$L$-degree} of $p_*$ is either $1$ or $-1$.
The following result, whose importance is crucial for the remaining part of this paper, shows how $\Ldeg(p_*)$ can be detected by the characteristic polynomial of $L$.

\begin{lemma}
\label{L-degree simple eigenpoint}
Let $\l_*$ be a simple real eigenvalue of an endomorphism $L$ of $\R^k$ and $p_* = (\l_*,\v_*)$ any one of the two corresponding eigenpoints.
Then the \hbox{$L$-degree} of $p_*$ is one-half of the sign-jump at $\l_*$ of the characteristic polynomial of $L$.
\end{lemma}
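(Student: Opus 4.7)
The plan is to use the fact $\Ldeg(p_*)=\sign\bigl(d(\Psil)_{p_*}\bigr)$, which holds because the differential is invertible, as pointed out just above the statement, and to match this sign, via Lemma \ref{sign-jump and associated automorphism}, with one-half of the sign-jump.

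First I would write down the differential explicitly. With $T=L-\l_* I$, the tangent space at $p_*$ is $\R\per\v_*^\perp$ and
\[
d(\Psil)_{p_*}(\dot\l,\dot\v)=T\dot\v-\dot\l\,\v_*.
\]
Next I would choose compatible positive bases: by the outward-normal convention for $\S$, take $(1,e_1,\dots,e_{k-1})$ on the domain, with $(e_1,\dots,e_{k-1})$ a positive basis of $\v_*^\perp$, so that $(\v_*,e_1,\dots,e_{k-1})$ is a positive basis of $\R^k$. In these bases the Jacobian is block-triangular,
\[
\begin{pmatrix} -1 & c^\top\\ \0 & B \end{pmatrix},
\]
with $B$ the matrix of $P\circ T|_{\v_*^\perp}$ in the basis $(e_i)$, where $P\colon\R^k\to\v_*^\perp$ denotes orthogonal projection. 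Expanding the determinant along the first column I read off
\[
\Ldeg(p_*)=-\sign\det\bigl(P\circ T|_{\v_*^\perp}\bigr).
\]

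I would then identify this with $-\sign\widehat T$. Since $\l_*$ is simple, $\R^k=\R\v_*\oplus\Im T$ and $T\v_*=\0$, so $\widehat T(w)=T(Pw)$ for every $w\in\Im T$. Setting $A=T|_{\v_*^\perp}\colon\v_*^\perp\to\Im T$ (an isomorphism, because $\v_*^\perp\cap\Ker T=\{\0\}$ and the two spaces have dimension $k-1$) and $Q=P|_{\Im T}\colon\Im T\to\v_*^\perp$, this reads $\widehat T=A\circ Q$ as an endomorphism of $\Im T$, while $P\circ T|_{\v_*^\perp}=Q\circ A$ as an endomorphism of $\v_*^\perp$. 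The elementary identity $\det(AQ)=\det(QA)$ for composable linear maps between spaces of equal dimension (immediate from matrix multiplication) then yields $\det\widehat T=\det\bigl(P\circ T|_{\v_*^\perp}\bigr)$, so $\Ldeg(p_*)=-\sign\widehat T$.

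To finish, Lemma \ref{sign-jump and associated automorphism} states that the sign-jump of $\Pl$ at $\l_*$ equals $-2\sign\widehat T=2\,\Ldeg(p_*)$, which is the desired conclusion. I expect the only genuinely delicate point to be the orientation accounting in the Jacobian step; once that is correctly discharged, the heart of the argument is the cyclic identity $\det(AQ)=\det(QA)$ across the non-square factorisation of $T$ through $\v_*^\perp$ and $\Im T$.
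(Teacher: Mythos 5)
Your proof is correct, and its skeleton coincides with the paper's: reduce to $\Ldeg(p_*)=\sign\bigl(d(\Psil)_{p_*}\bigr)$, show this sign is $-\sign(\widehat T)$, and conclude by Lemma \ref{sign-jump and associated automorphism}. Where you diverge is in how the identity $\sign\bigl(d(\Psil)_{p_*}\bigr)=-\sign(\widehat T)$ is established. The paper decomposes the \emph{codomain} as $\R^k=\R\v_*\oplus T(\v_*^\perp)$, so that the differential splits exactly as the direct sum of $\Lambda\colon\dot\l\mapsto-\dot\l\v_*$ and $\widetilde T\colon\v_*^\perp\to T(\v_*^\perp)$; Remark \ref{product of isomorphism} then gives $\sign(d(\Psil)_{p_*})=\sign(\Lambda)\sign(\widetilde T)=-\sign(\widetilde T)$, and a separate orientation-transport argument (all complements of the oriented line $\R\v_*$ are coherently oriented via the quotient $\R^k/\R\v_*$) identifies $\sign(\widetilde T)$ with $\sign(\widehat T)$. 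You instead use the orthogonal splitting $\R^k=\R\v_*\oplus\v_*^\perp$, which makes the Jacobian block-triangular at the cost of introducing the projected operator $P\circ T|_{\v_*^\perp}$ rather than $\widehat T$ itself; you then recover $\det\widehat T$ through the cyclic identity $\det(AQ)=\det(QA)$ applied to the factorisation of $T$ through $\v_*^\perp$ and $\Im T$. Both routes are sound; the paper's avoids matrices and the projection entirely but needs the slightly delicate point about compatible orientations of different complements, whereas yours is more computational and self-contained (it even re-proves invertibility of the differential, since $\det B=\det\widehat T\neq0$), trading the orientation transport for the non-square determinant identity. Your orientation bookkeeping is consistent with the paper's outward-normal convention, so the step $\Ldeg(p_*)=-\sign\det\bigl(P\circ T|_{\v_*^\perp}\bigr)$ is justified.
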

\begin{proof}
As pointed out in \cite{BeCaFuPe-s3}, and easy to check, the differential
\[
d(\Psil)_{p_*}\colon T_{p_*}(\R\per\S) \to \R^k, \quad (\dot \l, \dot\v) \mapsto (L-\l_*I)\dot\v - \dot\l\v_*
\]
of $\Psil$ at $p_*$ is an isomorphism. Therefore, $\Ldeg(p_*) = \sign(d(\Psil)_{p_*})$.

Hence, by Lemma \ref{sign-jump and associated automorphism}, it is sufficient to show that $d(\Psil)_{p_*}$ and the automorphism $\widehat T \in \L(\Im T)$ associated to $T = L-\l_*I$ have opposite signs.
As $\l_*$ is simple, we have the splitting
\[
\R^k = \Ker T \oplus \Im T = \R\v_* \oplus T(\v_*^\perp).
\]
Since the line $\R\v_*$ is oriented (by the vector $\v_*$), so is, according with the standard orientation of $\R^k$, any of its complement (such as $\v_*^\perp$ and $T(\v_*^\perp)$), as well as the quotient space $\R^k\!\slash\R\v_*$, which is canonically isomorphic to any complement of $\R\v_*$.
This implies that the automorphism $\widehat T$ is orientation preserving if and only if so is the restriction $\widetilde T\colon \v_*^\perp \to T(\v_*^\perp)$ of $T$.

Thus, it is enough to prove that $\widetilde T$ and $d(\Psil)_{p_*}$ have opposite signs.
To this purpose, observe that the domain $T_{p_*}(\R\per\S)$ of the differential $d(\Psil)_{p_*}$ is the product $\R \per T_{v_*}(\S) = \R \per \v_*^\perp$.
Moreover,
\[
d(\Psil)_{p_*}\colon \R \per \v_*^\perp \to \R^k = \R\v_* \oplus T(\v_*^\perp)
\]
is the sum of two maps: the first one, call it $\Lambda$, acts from $\R$ to $\R\v_*$, and is given by $\dot \l \mapsto -\dot\l\v_*$;
the second one is $\widetilde T$.
Therefore, according to Remark \ref{product of isomorphism}, one has $\sign(d(\Psil)_{p_*}) = \sign(\Lambda)\sign(\widetilde T)$.
Since $\Lambda$ is orientation reversing, we finally get $\sign(d(\Psil)_{p_*}) = -\sign(\widetilde T)$.
\end{proof}

As a consequence of Lemma \ref{L-degree simple eigenpoint} we get

\begin{lemma}
\label{L-degree simple eigenset}
Let $\l_*$ be a simple real eigenvalue of $L \in \L(\R^k)$.
Then the \hbox{$L$-degree} of the $\l_*$-eigenset is the sign-jump at $\l_*$ of the characteristic polynomial of $L$.
Consequently, it is either $2$ or $-2$.
\end{lemma}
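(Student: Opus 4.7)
The plan is to reduce Lemma \ref{L-degree simple eigenset} to Lemma \ref{L-degree simple eigenpoint} using the Additivity Property of the Brouwer degree, together with the observation that a simple eigenvalue has a two-point eigensphere.

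First I would observe that since $\l_*$ is simple, its geometric multiplicity is one, so (as already noted in the paragraph following the definition of $\mathcal S_\l$) the eigensphere $\mathcal S_{\l_*}$ consists of exactly two points, $\v_*$ and $-\v_*$. Consequently the $\l_*$-eigenset $\{\l_*\}\per\mathcal S_{\l_*}$ is the two-point set $\{p_*,\bar p_*\}$, where $p_* = (\l_*,\v_*)$ and $\bar p_* = (\l_*,-\v_*)$. Each of these eigenpoints is in particular isolated in $\Psil^{-1}(\0)$, because Lemma \ref{L-degree simple eigenpoint} tells us that the differential $d(\Psil)_{p_*}$ (and analogously $d(\Psil)_{\bar p_*}$) is an isomorphism. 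Therefore we can pick disjoint isolating neighborhoods $U, \bar U \subset \R\per\S$ of $p_*$ and $\bar p_*$, and their union $U\cup \bar U$ is an isolating neighborhood of the whole eigenset among $\Psil^{-1}(\0)$.

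Next I would apply the Additivity Property of the classical Brouwer degree to the admissible triple $(\Psil, U\cup \bar U, \0)$ in order to split
\[
\Ldeg\big(\{\l_*\}\per\mathcal S_{\l_*}\big) = \deg(\Psil, U\cup\bar U, \0) = \deg(\Psil, U, \0) + \deg(\Psil, \bar U, \0) = \Ldeg(p_*) + \Ldeg(\bar p_*).
\]
By Lemma \ref{L-degree simple eigenpoint}, both $\Ldeg(p_*)$ and $\Ldeg(\bar p_*)$ equal one-half of the sign-jump of $\Pl$ at $\l_*$ (the formula depends only on $\l_*$, not on which of the two unit eigenvectors has been chosen). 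Hence the sum equals the sign-jump of $\Pl$ at $\l_*$, which proves the main assertion.

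Finally, to get the ``either $2$ or $-2$'' conclusion, I would note that a simple eigenvalue $\l_*$ of $L$ is by definition a simple root of the characteristic polynomial $\Pl$, so $\Pl'(\l_*)\neq 0$ and $\Pl$ changes sign at $\l_*$; thus its sign-jump cannot be $0$ and must lie in $\{2,-2\}$ according to Definition \ref{sign-jump}. I do not foresee any real obstacle here — the only mild subtlety is checking that the two eigenpoints can be simultaneously isolated by disjoint open sets so that Additivity applies, but this is immediate from the non-degeneracy of $d(\Psil)_{p_*}$ established in the proof of Lemma \ref{L-degree simple eigenpoint}.
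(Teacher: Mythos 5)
Your proof is correct and follows essentially the same route as the paper's: split the two-point eigenset into its two isolated eigenpoints, apply the Additivity Property, and invoke Lemma \ref{L-degree simple eigenpoint} for each point. The extra details you supply (disjoint isolating neighbourhoods, and the observation that a simple root forces a nonzero sign-jump) are correct and merely make explicit what the paper leaves implicit.
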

\begin{proof}
Since $\l_*$ is simple, the eigenset $\{\l_*\}\per\mathcal S_{\l_*}$ is made up of two isolated eigenpoints.
Thus, because of the Additivity Property of the Brouwer degree, its \hbox{$L$-degree} is the sum of the $L$-degrees of the two eigenpoints, and the assertion follows from Lemma~\ref{L-degree simple eigenpoint}.
\end{proof}

The following remark belongs to the folklore, and the easy proof is left to the reader.

\begin{remark}
\label{stability of avoided target}
Let $f\colon X\per K \to Y$ be a continuous map between metric spaces, and let $q \in Y$ be a ``target point''.
Assume that $K$ is compact.
Then, the set of the elements $x \in X$ for which $q$ lies in the image $f_x(K)$ of the partial map $f_x\colon K \to Y$ is closed.
\end{remark}

We recall that, if an operator $T \in \L(\R^k)$ has no eigenpoints in the boundary of a bounded open subset $U$ of $\R\per\S$, then $\deg(\Psit,U,\0)$ is well defined. The next result regards the continuous dependence on $T$ of this integer.

\begin{theorem}
\label{continuous dependence of degree}
Let $U$ be a bounded open subset of the cylinder $\R\per\S$. Then the set $\mathcal U$ of the operators $T \in \L(\R^k)$ without eigenpoints in the boundary of $U$ is open in $\L(\R^k)$.
Moreover, the map $T \in \mathcal U \mapsto \deg(\Psit,U,\0)$ is locally constant.
\end{theorem}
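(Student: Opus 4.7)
The plan is to prove the two assertions separately: openness of $\mathcal U$ is a direct application of Remark \ref{stability of avoided target}, while local constancy of the degree function will follow from the convexity of $\L(\R^k)$ together with the classical Homotopy Invariance of the Brouwer degree.

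First I would establish openness of $\mathcal U$. Since $\S$ is compact and $U$ is bounded in the ambient $\R\per\R^k$, the closure $\overline U$ (taken in $\R\per\S$) is contained in a compact set of the form $[-M,M]\per\S$; in particular $\partial U$ is compact. Apply Remark \ref{stability of avoided target} to the map $\Psi$ restricted to $\L(\R^k)\per\partial U$, with $X = \L(\R^k)$, $K = \partial U$ and $q = \0$: the set $\{T \in \L(\R^k): \0 \in \Psit(\partial U)\}$ is closed. Its complement is exactly $\mathcal U$, which is therefore open.

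Next I would show local constancy. Fix $T_0 \in \mathcal U$ and, by the previous step, choose an open ball $B \subset \mathcal U$ centered at $T_0$. For any $T_1 \in B$ the straight segment $T_s := (1-s)T_0 + sT_1$, $s \in [0,1]$, remains in the convex set $B$. Consider the continuous homotopy
$$
H\colon [0,1]\per\overline U \to \R^k, \qquad (s,(\l,\v)) \mapsto T_s\v - \l\v = \Psi_{T_s}(\l,\v).
$$
Because $T_s \in \mathcal U$ for every $s \in [0,1]$, the partial map $H(s,\cdot)$ never vanishes on $\partial U$; equivalently, $\0 \notin H([0,1]\per\partial U)$, so the homotopy is admissible for the Brouwer degree on $U$. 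The Homotopy Invariance Property then yields $\deg(\Psi_{T_0},U,\0) = \deg(\Psi_{T_1},U,\0)$, proving that $T \mapsto \deg(\Psit,U,\0)$ is constant on $B$, hence locally constant on $\mathcal U$.

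The main obstacle, which is minor, is a matter of bookkeeping: one must read ``bounded'' in the ambient Euclidean space $\R\per\R^k$, so that compactness of $\S$ delivers compactness of $\overline U$, and hence of $\partial U$. Without this, Remark \ref{stability of avoided target} could not be invoked with $K = \partial U$, and the homotopy constructed above would not be admissible. Once this point is settled, both conclusions are straightforward consequences of tools already collected in the preliminaries.
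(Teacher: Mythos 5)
Your proof is correct and follows essentially the same route as the paper: openness of $\mathcal U$ via Remark \ref{stability of avoided target} applied to the compact set $\partial U$, and local constancy via the affine homotopy $t\mapsto (1-t)T_0+tT_1$ inside a convex neighbourhood contained in $\mathcal U$, concluded by the Homotopy Invariance Property. The only differences (an open ball in place of a general convex neighbourhood, and the explicit remark that boundedness of $U$ in the ambient space yields compactness of $\partial U$) are cosmetic.
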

\begin{proof}
Since the boundary $\partial U$ of $U$ is a compact set, the first assertion follows directly from Remark \ref{stability of avoided target}.
Consequently, the map $T \in \mathcal U \mapsto \deg(\Psit,U,\0)$ is well defined.
To prove that it is locally constant, take any $L \in \mathcal U$ and let $\mathcal V \subseteq \mathcal U$ be a convex neighborhood of $L$.
Given $T \in \mathcal V$, it is enough to show that $\deg(\Psit,U,\0) = \deg(\Psil,U,\0)$.
To this purpose, consider the homotopy $H \colon [0,1] \per \overline U \to \R^k$, defined by
\[
(t,(\l,\v)) \mapsto (L + t(T-L))\v - \l\v,
\]
and notice that, because of the convexity of $\mathcal V$, $L + t(T-L) \in \mathcal V$ for all $t \in [0,1]$.
This implies that the partial map $H_t \colon \overline U \to \R^k$, $t \in [0,1]$, never vanishes on $\partial U$.
Thus, because of the Homotopy Invariance Property of the degree, one has
\[
\deg(H_0,U,\0) = \deg(H_1,U,\0),
\]
and the assertion follows from the equalities $H_0 = \Psil$ and $H_1 = \Psit$.
\end{proof}

\begin{theorem}
\label{degree associated to (a,b)}
Given $L \in \L(\R^k)$, let $(a,b)$ be a bounded real interval such that neither $a$ nor $b$ are eigenvalues of $L$.
Then
\[
\deg(\Psil,(a,b)\per \S,\0) = \sign\Pl(b) - \sign\Pl(a),
\]
where $\Pl$ is the characteristic polynomial of $L$.
In particular, if $\l_*$ is any real eigenvalue of $L$, then the \hbox{$L$-degree} of the $\l_*$-eigenset is the sign-jump at $\l_*$ of $\Pl$.
\end{theorem}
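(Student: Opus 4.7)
The plan is a perturbation-plus-telescoping argument: I would first reduce to the generic case in which all real eigenvalues of $L$ lying in $(a,b)$ are simple, and then combine the additivity of the Brouwer degree with Lemma \ref{L-degree simple eigenset}.

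For the reduction, I would note that the set $\mathcal G \subset \L(\R^k)$ of operators whose characteristic polynomial has $k$ distinct (possibly complex) roots is open and dense. Since the assignment $T \mapsto \P_T(\l)$ is continuous and $\Pl(a), \Pl(b) \neq 0$, for every $L' \in \L(\R^k)$ sufficiently close to $L$ one has $\sign \P_{L'}(a) = \sign \Pl(a)$ and $\sign \P_{L'}(b) = \sign \Pl(b)$, so in particular neither $a$ nor $b$ is an eigenvalue of $L'$. Shrinking the neighborhood if necessary, Theorem \ref{continuous dependence of degree} yields $\deg(\Psi_{L'},(a,b)\per\S,\0) = \deg(\Psil,(a,b)\per\S,\0)$. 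Picking $L' \in \mathcal G$ in this neighborhood, it suffices to establish the formula for $L'$.

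Let $\mu_1 < \mu_2 < \cdots < \mu_n$ be the real eigenvalues of $L'$ in $(a,b)$; since $L' \in \mathcal G$, each is simple. Choose pairwise disjoint isolating neighborhoods $U_j \subset (a,b)\per\S$ of the eigensets $\{\mu_j\}\per\mathcal S_{\mu_j}$. By additivity,
\[
\deg(\Psi_{L'},(a,b)\per\S,\0) = \sum_{j=1}^{n} \deg(\Psi_{L'},U_j,\0),
\]
and Lemma \ref{L-degree simple eigenset} applied to $L'$ identifies each summand with the sign-jump of $\P_{L'}$ at $\mu_j$. Because $\P_{L'}$ has constant sign on each component of $(a,b) \setminus \{\mu_1,\dots,\mu_n\}$, the sum of these sign-jumps telescopes to $\sign \P_{L'}(b) - \sign \P_{L'}(a)$, which by the choice of $L'$ equals $\sign \Pl(b) - \sign \Pl(a)$.

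For the ``in particular'' claim, I would fix $\e > 0$ so small that $\l_*$ is the unique eigenvalue of $L$ in $[\l_* - \e, \l_* + \e]$ and $\Pl(\l_* \pm \e) \neq 0$. Applied to $(\l_* - \e, \l_* + \e)$, the main formula has left-hand side equal to $\Ldeg(\{\l_*\}\per\mathcal S_{\l_*})$, since $(\l_* - \e, \l_* + \e)\per\S$ is then an isolating neighborhood of the $\l_*$-eigenset among $\Psil^{-1}(\0)$, and right-hand side equal to the sign-jump of $\Pl$ at $\l_*$ by Definition \ref{sign-jump} (the signs $\sign \Pl(\l_* \pm \e)$ are independent of the specific $\e$ satisfying the above, since $\l_*$ is then the only real root of $\Pl$ in $[\l_* - \e, \l_* + \e]$). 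The main subtlety of the plan is ensuring that a single perturbation $L'$ simultaneously preserves the Brouwer degree on $(a,b)\per\S$ and the signs of the characteristic polynomial at $a$ and $b$; this is resolved exactly by Theorem \ref{continuous dependence of degree} together with continuity of $\P$ at non-root values.
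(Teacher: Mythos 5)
Your proposal is correct and follows essentially the same route as the paper: perturb $L$ to a nearby operator with only simple eigenvalues, using continuity of $\P$ at $a$ and $b$ together with Theorem \ref{continuous dependence of degree} to preserve both sides of the formula, and then conclude by additivity and Lemma \ref{L-degree simple eigenset}. The explicit telescoping of the sign-jumps, which the paper leaves implicit, is a welcome addition.
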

\begin{proof}
We need to prove only the first assertion: the last one is a straightforward consequence of this, keeping in mind the definitions of \hbox{$L$-degree} (Definition \ref{definition of L-degree}) and sign-jump (Definition \ref{sign-jump}).

If $(a,b)$ contains only simple eigenvalues (or no eigenvalues), the first assertion follows from the Additivity Property of the Brouwer degree and Lemma \ref{L-degree simple eigenset}.

Otherwise, because of the continuity of the map
\[
\P\colon \L(\R^k)\per\R \to \R, \quad (T,\l) \mapsto \det(T-\l I),
\]
there exists an open neighborhood $\mathcal V$ of $L$ in $\L(\R^k)$ such that
\[
\sign\Pt(b) - \sign\Pt(a) =
\sign\Pl(b) - \sign\Pl(a),
\]
for all $T \in \mathcal V$.

Applying Theorem \ref{continuous dependence of degree} with $U = (a,b)\per \S$, we may also assume that $\mathcal V$ is contained in $\mathcal U$ and that
\[
\deg(\Psit,(a,b)\per \S,\0) = \deg(\Psil, (a,b)\per \S,\0),
\]
for all $T \in \mathcal V$.

The first assertion now follows from that fact that $\mathcal V$ contains operators having only simple eigenvalues.
\end{proof}

The following result extends Lemma \ref{L-degree simple eigenpoint} to the case in which the geometric (but not necessarily the algebraic) multiplicity of an eigenvalue is one.

\begin{theorem}
\label{L-degree isolated eigenpoint}
Let $p_*=(\l_*,\v_*)$ be an isolated eigenpoint of an endomorphism $L$ of $\R^k$.
Then, the \hbox{$L$-degree} of $p_*$ is one-half of the sign-jump at $\l_*$ of the characteristic polynomial of $L$.
\end{theorem}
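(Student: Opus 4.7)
The plan is to reduce the isolated-eigenpoint case to the $\l_*$-eigenset computation already supplied by Theorem \ref{degree associated to (a,b)}, by combining the additivity of the Brouwer degree with a ``twin-brother'' symmetry between $p_*$ and $\bar p_* = (\l_*,-\v_*)$. First I would observe that the isolation hypothesis forces $\dim\Ker(L-\l_*I) = 1$, since otherwise the eigensphere $\mathcal S_{\l_*}$ would contain a continuum of eigenpoints through $\v_*$. Hence $\mathcal S_{\l_*} = \{\v_*,-\v_*\}$, and the $\l_*$-eigenset consists of exactly the two isolated eigenpoints $p_*$ and $\bar p_*$. By Theorem \ref{degree associated to (a,b)} the $L$-degree of the whole $\l_*$-eigenset equals the sign-jump of $\Pl$ at $\l_*$, and by the Additivity Property it decomposes as $\Ldeg(p_*) + \Ldeg(\bar p_*)$. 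The proof will therefore be complete once the symmetry $\Ldeg(p_*) = \Ldeg(\bar p_*)$ is established.

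To obtain that symmetry I would use the involution
\[
\sigma\colon \R\per\S \to \R\per\S, \quad (\l,\v) \mapsto (\l,-\v),
\]
which sends $p_*$ to $\bar p_*$ and satisfies the identity $\Psil \circ \sigma = -\Psil$. Under the natural outward-normal orientation of $\S$ fixed in Section~\ref{Results 1}, the antipodal map on $\S^{k-1}$ has degree $(-1)^k$; consequently $\sigma$ has constant sign $(-1)^k$ on $\R\per\S$. The linear isomorphism $-\mathrm{id}\colon\R^k\to\R^k$ likewise has determinant $(-1)^k$. Hence, for any isolating neighborhood $U$ of $p_*$ among $\Psil^{-1}(\0)$, the image $\sigma(U)$ is an isolating neighborhood of $\bar p_*$, and the change-of-variables formula for the Brouwer degree gives
\[
\Ldeg(\bar p_*) = \deg(\Psil,\sigma(U),\0) = (-1)^k \deg(\Psil\circ\sigma,U,\0) = (-1)^k\deg(-\Psil,U,\0) = (-1)^{2k}\Ldeg(p_*) = \Ldeg(p_*).
\]

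The main obstacle is the orientation bookkeeping that justifies the two cancelling factors of $(-1)^k$, independently of the parity of $k$. Concretely, at each $\v\in\S$ one must compare the orientations that the outward-normal convention induces on $T_\v(\S)=\v^\perp$ and $T_{-\v}(\S)=\v^\perp$: these orientations differ by a factor of $-1$ (because replacing $\v$ by $-\v$ in a positive ordered basis $(\v,e_1,\dots,e_{k-1})$ of $\R^k$ flips its sign), and this flip combines with the determinant $(-1)^{k-1}$ of $-\mathrm{id}_{\v^\perp}$ to yield the stated sign $(-1)^k$ for the differential of $\sigma$. Once this verification is in hand, the conclusion $2\Ldeg(p_*) = $ sign-jump of $\Pl$ at $\l_*$ follows at once, and the rest of the proof is a routine application of additivity and the change-of-variables formula already used throughout the section.
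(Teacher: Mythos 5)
Your proposal is correct, and its opening reduction coincides with the paper's: isolation forces geometric multiplicity one, so the $\l_*$-eigenset is $\{p_*,\bar p_*\}$, its total \hbox{$L$-degree} is the sign-jump by Theorem \ref{degree associated to (a,b)}, and Additivity splits it as $\Ldeg(p_*)+\Ldeg(\bar p_*)$. Where you genuinely diverge is in proving the key symmetry $\Ldeg(p_*)=\Ldeg(\bar p_*)$. The paper obtains it by perturbation: it isolates the two eigenpoints in opposite hemispherical cylinders $U^+=(a,b)\per\S^+$ and $U^-=(a,b)\per\S^-$, replaces $L$ by a nearby operator $T$ having only simple eigenvalues (the degrees on $U^\pm$ being unchanged by Theorem \ref{continuous dependence of degree}), and notes via Lemma \ref{L-degree simple eigenpoint} that each eigenpoint of $T$ in $U^+$ has a twin in $U^-$ with the same \hbox{$T$-degree}. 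You instead exploit the equivariance $\Psil\circ\sigma=-\Psil$ for the involution $\sigma(\l,\v)=(\l,-\v)$ and cancel the two factors $(-1)^k$ arising from $\sign(\sigma)$ and $\det(-I)$; your orientation bookkeeping for the antipodal map under the outward-normal convention (the flip of the induced orientation on $\v^\perp$ combined with $\det(-\mathrm{id}_{\v^\perp})=(-1)^{k-1}$) is correct, and $\sigma(U)$ does isolate $\bar p_*$ since $\sigma$ preserves $\Psil^{-1}(\0)$. Your route is more direct and explains structurally why the twins carry equal degree, independently of any genericity; its only cost is that it invokes the behaviour of the degree under a diffeomorphism of the source and under composition with a linear automorphism of the target, standard properties that the paper never states explicitly, whereas the paper's argument stays entirely within the machinery it has already developed.
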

\begin{proof}
Since $p_*$ is isolated, the geometric multiplicity of $\l_*$ is one, and the eigenpoint $p_*$ has a twin brother, $\bar p_* = (\l_*,-\v_*)$.
Consequently, the $\l_*$-eigenset of $L$ is $\{p_*,\bar p_*\}$, and its \hbox{$L$-degree}, according to Theorem \ref{degree associated to (a,b)}, is the sign-jump at $\l_*$ of the characteristic polynomial $\Pl$ of $L$.
Therefore, because of the Additivity Property of the Brouwer degree, the \hbox{$L$-degree} of $\{p_*,\bar p_*\}$ is the sum of the \hbox{$L$-degrees} of $p_*$ and $\bar p_*$.
Since we do not have any preference, we are incline to believe that the \hbox{$L$-degree} of each one of these points is as in the assertion.
It remains to prove that this is true.

Let $(a,b)$ be an isolating interval of $\l_*$.
Thus, the \hbox{$L$-degree} of $\{p_*,\bar p_*\}$, which is the sign-jump at $\l_*$ of $\Pl$, equals $\sign\Pl(b) - \sign\Pl(a)$.

Now, consider the open subset
$
U = (a,b)\per (\S^+\cup\S^-)
$
of $\R\per\S$, where $\S^+$ is the open hemisphere $\{\v \in \S: \langle \v, \v_*\rangle > 0\}$ and $\S^-$ is the opposite one.
This set $U$ is clearly an isolating neighborhood of the $\l_*$-eigenset $\{p_*,\bar p_*\}$ of $L$.
Thus, its \hbox{$L$-degree}, $\sign\Pl(b) - \sign\Pl(a)$, coincides with $\deg(\Psil,U,\0)$, which, because of the Additivity Property of the degree, equals $\deg(\Psil, U^+,\0) + \deg(\Psil, U^-,\0)$, where $U^+ = (a,b)\per \S^+$ and $U^- = (a,b)\per \S^-$ are isolating neighborhoods of $p_*$ and $\bar p_*$, respectively.
Hence, it remains to show that $\deg(\Psil, U^+,\0) = \deg(\Psil, U^-,\0)$.

As in the proof of Theorem \ref{degree associated to (a,b)}, if $T \in \L(\R^k)$ is sufficiently close to $L$, then
\begin{equation}
\label{equality}
\deg(\Psil,U^+,\0) = \deg(\Psit, U^+,\0)
\quad \text{and} \quad
\deg(\Psil,U^-,\0) = \deg(\Psit, U^-,\0).
\end{equation}
We may assume that $T$ has only simple eigenvalues, so that its eigenset is made up of isolated eigenpoints of $T$ whose \hbox{$T$-degree}, according to Lemma \ref{L-degree simple eigenpoint}, is either $1$ or $-1$, and any eigenpoint (of $T$) in $U^+$ has a twin brother in $U^-$ with the same \hbox{$T$-degree}.
This, because of the Additivity Property of the degree, implies that $\deg(\Psit, U^+,\0) = \deg(\Psit, U^-,\0)$.

The assertion now follows from the equalities \eqref{equality}.
\end{proof}

\section{The perturbed eigenvalue problem and global continuation}
\label{Results 2}

Consider the following perturbed eigenvalue problem:
\begin{equation}
\label{perturbed eigenvalue problem}
\left\{
\begin{aligned}
&L\v + s N(\v)= \l \v,\\
&\v \in \S,
\end{aligned}\right.
\end{equation}
where $s$, $\l$ are real parameters, $L\in \L(\R^k)$, and $N\colon \S \to \R^k$ is a continuous map defined on the unit sphere of $\R^k$.

\medskip
An element $(s,\l,\v) \in \R\per\R\per\S$ is said to be a \emph{solution (triple)} of \eqref{perturbed eigenvalue problem} if it satisfies the equation $L\v + s N(\v)= \l\v$.
With a slight abuse of terminology, the last element $\v$ is said to be a \emph{unit eigenvector} corresponding to the \emph{eigenpair} $(s,\l)$.

We will denote by $\Sigma$ the set of solution triples of \eqref{perturbed eigenvalue problem} and by $\mathcal E$ its projection into the $s\l$-plane, which is the set of the eigenpairs.

\begin{remark}
\label{linear case}
If $N$ is defined on the whole space $\R^k$ and it is linear, then
\[
\mathcal E = \big\{(s,\l) \in \R^2: \det(L + s N -\l I) = 0 \big\}.
\]
\end{remark}

\medskip
Notice that $\Sigma$ is the set of zeroes of the continuous map
\[
\Phi \colon \R\per\R\per\S \to \R^k, \quad (s,\l,\v) \mapsto L\v - \l\v + s N(\v).
\]
Therefore, $\Sigma = \Phi^{-1}(\0)$ is a closed subset of $\R\per\R\per\S$.

The subset $\mathcal E$ of $\R^2$ is closed as well. This is a consequence of Remark \ref{stability of avoided target} with $X = \R^2$, $K = \S$, $Y = \R^k$, $q = \mathbf 0$ and $f = \Phi$.

\medskip
The following result shows, in particular, that, if the space $\R^k$ is odd dimensional, then $\mathcal E$ is unbounded.
Consequently, so is $\Sigma$.

\begin{theorem}
\label{odd dimension}
Let $\R^k$ be odd dimensional.
Then, for any $s \in \R$, there exists at least one $\l \in \R$ such that $(s,\l) \in \mathcal E$.
\end{theorem}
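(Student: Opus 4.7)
The plan is to fix $s \in \R$ and show that the partial map
\[
\Phi_s\colon \R\per\S \to \R^k, \quad (\l,\v)\mapsto L\v + sN(\v) - \l\v
\]
has a zero, by computing a Brouwer degree on a large strip $(-R,R)\per\S$ and invoking the Existence Property. A uniform a priori bound is the starting point: since $\S$ is compact, $M := \max_{\v\in\S}\|N(\v)\|$ is finite, and any zero $(\l,\v)$ of $\Phi_s$ satisfies
\[
|\l| \;=\; \|L\v + sN(\v)\| \;\le\; \|L\| + |s|M.
\]
So I pick $R > \|L\| + |s|M$ with $\pm R$ not eigenvalues of $L$ (only finitely many values are forbidden), which confines all zeros of $\Phi_s$ to $(-R,R)\per\S$.

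The next step is to deform the nonlinear term away. Consider the homotopy
\[
H\colon [0,1]\per(-R,R)\per\S \to \R^k, \quad H(t,\l,\v) = L\v + tsN(\v) - \l\v.
\]
Since $|ts|\le |s|$, the same a priori bound persists uniformly in $t$, so no zero of $H$ lies on $\{\pm R\}\per\S$. By the Homotopy Invariance Property,
\[
\deg(\Phi_s,(-R,R)\per\S,\0) \;=\; \deg(\Psil,(-R,R)\per\S,\0),
\]
and Theorem \ref{degree associated to (a,b)} identifies the right-hand side with $\sign\Pl(R) - \sign\Pl(-R)$.

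The oddness of $k$ intervenes precisely here: $\Pl$ is a real polynomial of odd degree, hence $\Pl(R)$ and $\Pl(-R)$ take opposite signs once $R$ is sufficiently large. Enlarging the initial $R$ if necessary (to a non-eigenvalue of $L$), the above degree equals $\pm 2 \neq 0$, and the Existence Property delivers a zero $(\l_*,\v_*)$ of $\Phi_s$ in $(-R,R)\per\S$. This gives $(s,\l_*,\v_*) \in \Sigma$ and $(s,\l_*) \in \mathcal E$, as required. I do not foresee a substantive obstacle: the only care needed is to arrange the a priori bound so that Homotopy Invariance applies unambiguously, which is immediate from the boundedness of $N$ on the compact sphere $\S$.
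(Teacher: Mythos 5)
Your proof is correct, but it takes a genuinely different route from the paper's. The paper argues by contradiction: assuming $\Phi_{\check s}(\l,\cdot)$ never vanishes on $\S$, it forms the selfmaps $H_\l$ of $\S$ obtained by radial retraction, notes that $\deg(H_\l)$ is independent of $\l$, and derives a contradiction from the fact that $H_a$ is homotopic to the identity for $a\ll 0$ while $H_b$ is homotopic to the antipodal map for $b\gg 0$, whose degree on the \emph{even}-dimensional sphere $\S=S^{k-1}$ is $-1$. You instead work directly on the cylinder: an a priori bound confines all zeros of $\Phi_s$ to $(-R,R)\per\S$, the affine homotopy in $t$ reduces the computation to $\deg(\Psil,(-R,R)\per\S,\0)$, and Theorem \ref{degree associated to (a,b)} evaluates this as $\sign\Pl(R)-\sign\Pl(-R)=\pm 2$ because the characteristic polynomial has odd degree. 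Both arguments hinge on the same parity phenomenon, seen once through the antipodal map on $S^{k-1}$ and once through the sign change of $\Pl$ at $\pm\infty$. The paper's proof is more elementary in that it bypasses the degree machinery of Section 3 entirely; yours integrates better with that machinery and yields slightly more, namely the explicit value $\deg(\Phi_s,(-R,R)\per\S,\0)=\pm 2$ for every $s$, not just nonemptiness of the $s$-slice of $\Sigma$. One cosmetic remark: the parenthetical about enlarging $R$ to avoid eigenvalues is unnecessary, since any $R>\|L\|$ already exceeds every eigenvalue in modulus.
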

\begin{proof}
Assume, by contradiction, that the assertion is false.
Then, there exists $\check s \in \R$ such that
$
L\v + \check s N(\v) - \l\v \not= \mathbf 0
$
for all $(\l,\v) \in \R\per\S$.

Now, given any $\l \in \R$, consider the continuous map $H_\l\colon \S \to \S$ defined by composing the partial map
\[
\Phi_{(\check s,\l)}\colon \v \mapsto L\v + \check s N(\v) - \l\v \in \R^k\setminus\{\mathbf 0\}
\]
of $\Phi$ at $(\check s,\l)$ with the radial retraction $r\colon \R^k\setminus\{\mathbf 0\} \to \S$.

Since $\S$ is an orientable, compact, connected, real manifold, the Brouwer degree of $H_\l$, $\deg(H_\l)$, is well defined (see Remark \ref{selfmap}) and, because of the Homotopy Invariance Property, does not depend on $\l \in \R$.

Let us show that this contradicts the odd dimensionality of $\R^k$.
To this purpose, we will prove that, if $-a, b \in \R$ are bigger than
$
\max \{\|L\v + \check s N(\v)\|: \v \in \S\},
$
then $H_a$ is homotopic to the identity (of $\S$), therefore $\deg(H_a) = 1$, and $H_b$ is homotopic to the antipodal map, which, because of the even dimensionality of $\S$, has degree $-1$ (see, for example, \cite{Mi}).
This contradiction will prove the assertion.

Indeed, $H_a$ is homotopic to the identity via the composition of the map
\[
(t,\v) \in [0,1]\per \S \mapsto t(L\v+ \check s N(\v)) - a\v \in \R^k\setminus\{\mathbf 0\}
\]
with the radial retraction $r$,
and a similar argument shows that $H_b$ is homotopic to the antipodal map of $\S$.
\end{proof}

Observe that $\Phi_0$, the partial map of $\Phi$ at $s=0$, coincides with the function $\Psil$ defined in Section~\ref{Results 1}.
Moreover, $\Sigma_0$, the slice of $\Sigma$ at $s=0$, is the same as the set $\mathcal S$ of the eigenpoints of $L$.
Notice that the $0$-slice $\mathcal E_0$ of $\mathcal E$ is the set of the real eigenvalues of $L$ and
\begin{equation*}
\label{trivial solutions}
\Sigma_0 = \bigcup_{\l \in \mathcal E_0} \{\l\} \per (\S \cap \Ker(L-\l I)).
\end{equation*}

The solution triples of the type $(0,\l,\v)$ are called \emph{trivial}.
Therefore, the (possibly empty) set of these distinguished elements is $\{0\}\per \Sigma_0$.
Analogously, an eigenpair $(s,\l)$ is said to be \emph{trivial} if $s=0$.
Thus, $\{0\}\per \mathcal E_0$ is the set of the trivial eigenpairs.

\medskip
Once one has a well-defined notion of solution of a given problem (as in the case of \eqref{perturbed eigenvalue problem}) and a distinguished subset of solutions (called \emph{trivial}), one may consider the natural notion of bifurcation point, provided that the set of solutions has a topology.
Therefore, regarding problem \eqref{perturbed eigenvalue problem}, we give the following

\begin{definition}
\label{bifurcation point}
A trivial solution of \eqref{perturbed eigenvalue problem} is a \emph{bifurcation point} (for problem \eqref{perturbed eigenvalue problem}) if it belongs to the closure of the set of nontrivial solutions.
\end{definition}

Identifications between trivial solutions and corresponding aliases are frequent in bifurcation theory.
Thus, if $(0,\l_*,\v_*)$ is a bifurcation point in the sense of Definition \ref{bifurcation point}, we may equivalently call ``bifurcation point'' its \emph{alias} $(\l_*,\v_*)$.
Actually, if it is clear that we are referring to the specific eigensphere $\mathcal S_{\l_*}$, we may simply call ``bifurcation point'' the vector $\v_*$.
Obviously, this is a significative information only when $\mathcal S_{\l_*}$ has positive dimension.
Regarding the case $\dim \mathcal S_{\l_*}>0$, in \cite{ChiFuPe1} a necessary condition as well as some sufficient conditions for a vector $\v_* \in \mathcal S_{\l_*}$ to be a bifurcation point are given.

\medskip

Our next result, Theorem \ref{continuation 1}, regards the existence of bifurcation points for problem \eqref{perturbed eigenvalue problem}.
In its proof we will use the following point-set topology result, which is particularly suited to our purposes and is deduced from general results by C. Kuratowski (see \cite{Ku}, Chapter $5$, Vol.~$2$). We also recommend \cite{Alex} for a helpful paper on connectivity theory.

\begin{lemma}[\cite{FuPe}]
\label{Whyburn}
Let $C$ be a compact subset of a locally compact metric space~$X$.
Assume that every compact subset of $X$ containing $C$ has nonempty boundary.
Then $X \setminus C$ contains a connected set whose closure in $X$ is non-compact and intersects~$C$.
\end{lemma}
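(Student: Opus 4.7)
The plan is to argue by contradiction via the one-point compactification $X^* = X \cup \{\infty\}$, which is compact Hausdorff because $X$ is locally compact and metric. The key observation is that the hypothesis translates cleanly into $X^*$: a compact subset $K \supseteq C$ of $X$ has empty boundary in $X$ precisely when $K$ is a clopen subset of $X^*$ excluding $\infty$. So the hypothesis rules out any clopen separation of $C$ from $\{\infty\}$ inside $X^*$.

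I would then invoke a classical theorem of Kuratowski in the compact Hausdorff space $X^*$: two disjoint nonempty closed subsets that no clopen partition separates must be met by a common connected component of the ambient space. Applied to $C$ and $\{\infty\}$, this produces a continuum $Q \subseteq X^*$ with $\infty \in Q$ and $Q \cap C \neq \emptyset$. The set $Q \setminus \{\infty\}$ is morally the candidate we want, but it need not be connected and it still meets $C$.

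To extract a legitimate set $F$ from $Q$, I would use Zorn's lemma to shrink $Q$ to a subcontinuum $M$ minimal among those satisfying $\infty \in M$ and $M \cap C \neq \emptyset$ (Zorn applies because nested intersections of continua are continua, and the finite intersection property preserves $M \cap C \neq \emptyset$). Boundary bumping then gives that each component of $M \setminus \{\infty\}$ has $\infty$ in its closure in $M$; combined with the minimality of $M$, the component containing any point of $M \cap C$ must, after adjoining $\{\infty\}$, coincide with $M$, so $M \setminus \{\infty\}$ is itself connected. A second, analogous application of boundary bumping and minimality isolates a connected set $F \subseteq M \setminus (C \cup \{\infty\}) \subseteq X \setminus C$ whose closure in $X^*$ still meets both $\{\infty\}$ and $C$; consequently $\overline F^X$ is non-compact (because $\infty \in \overline F^{X^*} \setminus X$) and $\overline F^X \cap C \neq \emptyset$, as required. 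The main obstacle I anticipate is precisely this last step: showing that after stripping $C \cap M$ from $M \setminus \{\infty\}$ a connected remnant persists whose $X^*$-closure recovers both $\{\infty\}$ and a point of $C$. Minimality of $M$ is what does the heavy lifting here, since any failure would produce a proper subcontinuum of $M$ inheriting the defining properties and contradicting minimality. Full details appear in \cite{FuPe}.
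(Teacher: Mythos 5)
The paper does not prove Lemma \ref{Whyburn}: it is imported from \cite{FuPe} with the remark that it follows from general results of Kuratowski, so there is no in-paper argument to measure yours against. Your outline is nonetheless the standard route, and its first stages are correct: the translation of the hypothesis into ``no clopen subset of $X^*=X\cup\{\infty\}$ contains $C$ and avoids $\infty$'' is exact, the Kuratowski separation theorem in the compact Hausdorff space $X^*$ then yields a continuum $Q$ meeting both $C$ and $\{\infty\}$, Zorn's lemma produces a minimal such subcontinuum $M$, and your boundary-bumping argument that $M\setminus\{\infty\}$ is connected is sound (for a component $D$ of $M\setminus\{\infty\}$ one has $\overline D=D\cup\{\infty\}$, and if $D$ meets $C$ then minimality forces $\overline D=M$).

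The final step, which you yourself single out as the crux, is where a genuine gap remains. For a component $D'$ of $U=M\setminus(C\cup\{\infty\})$, boundary bumping only yields $\overline{D'}\cap(C\cup\{\infty\})\neq\emptyset$, not that $\overline{D'}$ reaches \emph{both} $C$ and $\infty$; and minimality of $M$ gives no information about a $D'$ whose closure meets only one of the two sets, since minimality constrains only those subcontinua that contain $\infty$ \emph{and} meet $C$. That not every component works --- so that a genuine selection is needed --- is already visible in the topologist's sine curve, viewed as a continuum irreducible between the endpoint of the graph and a point of the limit segment: two of the three components of the complement of these two points cluster at only one of them. Nor can one simply partition $U$ into the components clustering at $\infty$ and those clustering at $C$ and contradict the connectedness of $M$, because components of an open subset of a continuum need not be open and an infinite union of them need not be closed. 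Excluding the bad configuration requires a further genuine input --- a second application of the quasi-component/separation machinery on a suitable compact set, or the structure theory of irreducible continua --- and this is precisely the content that your sketch delegates to ``minimality does the heavy lifting'' and to \cite{FuPe}.
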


\begin{theorem}
\label{continuation 1}
Let $\O$ be an open subset of $\,\R\per\R\per\S$ and let
\[
\O_0 = \{(\l,\v) \in \R\per\S: (0,\l,\v) \in \O\}
\]
denote the slice of $\O$ at $s=0$.
Assume that the degree of $\Psil$ in $\O_0$, $\deg_w(\Psil,\O_0,\0)$, is (defined and) different from zero.
Then $\O$ has a connected set of nontrivial solutions of \eqref{perturbed eigenvalue problem} whose closure in $\O$ is non-compact and contains at least one trivial solutions.
\end{theorem}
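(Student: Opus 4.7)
The plan is to apply the Whyburn-type Lemma \ref{Whyburn} to the locally compact metric space $X:=\Sigma\cap\O$ with distinguished compact subset $T:=\{(0,\l,\v)\in\O:\Psil(\l,\v)=\0\}$, the set of trivial solutions of \eqref{perturbed eigenvalue problem} lying in $\O$. The hypothesis that $\deg_w(\Psil,\O_0,\0)$ is defined means exactly that $\Psil^{-1}(\0)\cap\O_0$ is compact, and since $T$ is homeomorphic to this set, $T$ is compact. Because $X$ is closed in $\O$, closures of subsets of $X$ computed in $X$ and in $\O$ coincide, so the connected subset of $X\setminus T$ produced by Lemma \ref{Whyburn} (whose closure in $X$ is non-compact and meets $T$) yields exactly the asserted connected set of nontrivial solutions.

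It therefore suffices to verify the Whyburn hypothesis: every compact $K\subset X$ containing $T$ has nonempty relative boundary in $X$. Suppose, for contradiction, that such a $K$ is clopen in $X$. By local compactness of $\O$ and the fact that $X\setminus K$ is closed in $\O$, one can find a relatively compact open set $V\subset\O$ with $K\subset V$, $\overline V\subset\O$, and $\overline V\cap(X\setminus K)=\emptyset$. Then $\Sigma\cap\overline V=K$, so $\Phi^{-1}(\0)\cap V=K$ is compact and $\Phi$ does not vanish on $\partial V$.

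Now apply the Generalized Homotopy Invariance (Lemma \ref{Generalized Homotopy Invariance}) to $\Phi\colon V\to\R^k$, treating the $s$-coordinate as the homotopy parameter and $\R\per\S$ as the fibre manifold: the compactness of $\Phi^{-1}(\0)\cap V$ is precisely its hypothesis, so $s\mapsto\deg_w(\Phi_s,V_s,\0)$ is constant on $\R$. At $s=0$, since $\Phi_0=\Psil$, $V_0\subset\O_0$, and $\Psil^{-1}(\0)\cap\O_0\subset V_0$, the Excision Property gives $\deg_w(\Psil,V_0,\0)=\deg_w(\Psil,\O_0,\0)\neq0$. On the other hand $\overline V$ is bounded, so $V_s=\emptyset$ and $\deg_w(\Phi_s,V_s,\0)=0$ for $|s|$ sufficiently large, a contradiction. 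The main obstacle is the dimension mismatch of $\Phi$, which carries one coordinate more than its target $\R^k$: its Brouwer degree is not directly defined, and one must regard $s$ as a homotopy parameter and invoke the parameter-dependent form of the degree, whose applicability rests precisely on the compactness of $\Phi^{-1}(\0)\cap V$ engineered in the previous step.
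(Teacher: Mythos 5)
Your proposal is correct and follows essentially the same route as the paper: reduce to Lemma \ref{Whyburn} applied to $\Sigma\cap\O$ with the compact set of trivial solutions, then rule out a clopen compact $K$ by enclosing it in an open set on which $\Phi^{-1}(\0)$ is compact and invoking the Generalized Homotopy Invariance together with Excision at $s=0$. The only (immaterial) differences are that you take the enclosing open set relatively compact and kill the degree via emptiness of far slices of $\overline V$, whereas the paper uses an arbitrary open $W$ with $X\cap W=K$ and the emptiness of some slice $K_{\check s}$ of the compact set $K$.
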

\begin{proof}
It is enough to prove that the assumptions of Lemma \ref{Whyburn} are satisfied for
\[
X = \Phi^{-1}(\0) \cap \O \quad \text{and} \quad C = \{0\}\per X_0,
\]
where $X_0$ is the $0$-slice of $X$, so that $C$ is the set of the trivial solutions of \eqref{perturbed eigenvalue problem} which are contained in $\O$.

Notice that $X$ is locally compact, being open in the closed subset $\Phi^{-1}(\0)$ of the finite dimensional manifold $\R\per\R\per\S$.
Moreover, the subset $C$ of $X$ is compact, since its slice $X_0$ coincides with the set $\Psil^{-1}(\0)\cap \O_0$, whose compactness is ensured by the assumption that $\deg_w(\Psil,\O_0,\0)$ is well defined.
Thus, it remains to show that every compact subset of $X$ containing $C$ has nonempty boundary in $X$.

Assume the contrary.
Then, there exists a compact subset $K$ of $X$, containing $C$, whose boundary in $X$ is empty.
This means that $K$ is a relatively open subset of $X$.
That is, there exists an open subset $W$ of $\O$ such that $X \cap W = K$.

Observe that $K$ coincides with $\Phi^{-1}(\0)\cap W$.
Therefore, its compactness ensures that the integer $\deg_w(\Phi_s, W_s,\0)$ is well defined for any $s \in \R$ (recall Notation \ref{partial map}).
Moreover, because of Lemma \ref{Generalized Homotopy Invariance}, this integer does not depend on $s \in \R$.

Since $K$ is compact, there exists $\check s \in \R$ for which the solution set $K_{\check s} \subset W_{\check s}$ is empty. Consequently, because of the Existence Property of the degree, one has $\deg_w(\Phi_{\check s}, W_{\check s},\0)=0$, which implies
$\deg_w(\Phi_s, W_s,\0)=0$ for all $s \in \R$.
This contradicts the assumption $\deg_w(\Psil,\O_0,\0) \not= 0$, since, as we will show, one has $\deg_w(\Phi_0,W_0,\0)=\deg_w(\Psil,\O_0,\0)$.

To see this, observe first that $\Psil$ coincides with the partial map $\Phi_0$ of $\Phi$, so that $\deg_w(\Psil,\O_0,\0) = \deg_w(\Phi_0,\O_0,\0)$.
On the other hand, taking into account that the subset $X_0$ of $\O_0$ is actually contained in $W_0$, we have $\deg_w(\Phi_0,\O_0,\0) = \deg_w(\Phi_0,W_0,\0)$.
Therefore, we get the contradiction $\deg_w(\Phi_0,W_0,\0) \not= 0$, which implies the assertion.
\end{proof}

Recall that the slice $\Sigma_0$ of $\Sigma$ coincides with the set $\mathcal S$ of the eigenpoints of $L$. Therefore, $\{0\}\per\Sigma_0$ is the set of the trivial solutions of \eqref{perturbed eigenvalue problem}.

\begin{corollary}
\label{continuation 2}
Let $U$ be an open subset of $\,\R\per\S$.
Assume that $\deg_w(\Psil,U,\0)$ is (defined and) different from zero.
Then there exists a connected set of nontrivial solutions of \eqref{perturbed eigenvalue problem} whose closure meets $\{0\}\per\Sigma_0$ in $\{0\}\per U$ and is either unbounded or intersects $\{0\}\per\Sigma_0$ outside $\{0\}\per U$.
\end{corollary}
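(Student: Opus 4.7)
My plan is to deduce the corollary from Theorem \ref{continuation 1} by choosing the open set $\O$ carefully, so that its $0$-slice retains $U$ but excludes precisely the trivial solutions lying outside $U$. Concretely, I would define
\[
\O := (\R\per\R\per\S) \setminus \bigl(\{0\}\per(\Sigma_0 \setminus U)\bigr).
\]
Since $\Sigma_0 = \Psil^{-1}(\0)$ is closed in $\R\per\S$ and $U$ is open there, $\Sigma_0\setminus U$ is closed, so $\O$ is open in $\R\per\R\per\S$. Its $0$-slice is $\O_0 = \R\per\S \setminus (\Sigma_0\setminus U)$, which contains $U$ and satisfies $\Psil^{-1}(\0)\cap\O_0 = \Sigma_0\cap U$, a set that is compact by the hypothesis that $\deg_w(\Psil,U,\0)$ is defined. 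Since any sufficiently small isolating neighborhood of $\Sigma_0\cap U$ can be taken inside both $U$ and $\O_0$, the excision-based definition of the extended degree yields
\[
\deg_w(\Psil,\O_0,\0) = \deg_w(\Psil,U,\0) \neq 0.
\]

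The next step is to apply Theorem \ref{continuation 1} to this $\O$. It supplies a connected set $\Gamma\subset\O$ of nontrivial solutions of \eqref{perturbed eigenvalue problem} whose closure in $\O$ is non-compact and meets the set of trivial solutions. Since the trivial solutions contained in $\O$ are exactly the elements of $\{0\}\per(\Sigma_0\cap U)$, the closure of $\Gamma$ automatically meets $\{0\}\per\Sigma_0$ inside $\{0\}\per U$, which delivers the first part of the conclusion.

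It remains to establish the dichotomy. Let $\overline{\Gamma}$ denote the closure of $\Gamma$ in the ambient space $\R\per\R\per\S$, so that the closure of $\Gamma$ in $\O$ coincides with $\overline{\Gamma}\cap\O$. If $\overline{\Gamma}$ is unbounded we are in the first alternative. Otherwise $\overline{\Gamma}$ is closed and bounded in the finite-dimensional manifold $\R\per\R\per\S$, hence compact; the non-compactness of $\overline{\Gamma}\cap\O$ provided by Theorem \ref{continuation 1} then forces $\overline{\Gamma}\not\subset\O$. Any point of $\overline{\Gamma}\setminus\O$ lies in $\{0\}\per(\Sigma_0\setminus U)$, and since $\overline{\Gamma}\subset\Sigma$ (closedness of the zero set of $\Phi$), such a point is a trivial solution in $\{0\}\per\Sigma_0$ lying outside $\{0\}\per U$. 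This is exactly the second alternative.

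The only conceptually delicate step is identifying the correct $\O$: one must delete just the trivial solutions outside $U$, so that non-compactness of the closure in $\O$ can be converted into the existence of a limit point precisely in the place we want, namely among the trivial solutions outside $\{0\}\per U$. The remaining ingredients — the closedness of $\Sigma_0$, the excision identity, and the fact that closed bounded subsets of $\R\per\R\per\S$ are compact — are routine and pose no real obstacle.
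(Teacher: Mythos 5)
Your proof is correct and takes exactly the paper's route: the paper's own proof consists precisely of defining $\O$ by deleting the trivial solutions outside $\{0\}\per U$ and then invoking Theorem \ref{continuation 1}. Your write-up merely supplies the details the paper leaves implicit (the excision identity $\deg_w(\Psil,\O_0,\0)=\deg_w(\Psil,U,\0)$ and the conversion of non-compactness of the closure in $\O$ into the stated dichotomy), and these are all handled correctly.
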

\begin{proof}
Define the open subset $\O$ of $\R\per\R\per\S$ by removing the elements of $\{0\}\per\Sigma_0$ which are not in $\{0\}\per U$. Then, apply Theorem \ref{continuation 1}.
\end{proof}

\smallskip
Observe that $\Sigma_0$ is a finite union of connected components, each of them consisting of an isolated point (when the geometric multiplicity of the corresponding eigenvalue is $1$) or a $\l$-eigenset with $\l$ having geometric multiplicity bigger than $1$. Thus, each one of these components is clopen in $\Sigma_0$. Therefore, if $\mathcal C$ is a connected component of $\Sigma$, then its $0$-slice $\mathcal C_0$ is clopen in $\Sigma_0$ and, consequently, its \hbox{$L$-degree}, $\Ldeg(\mathcal C_0)$, is well defined.

\begin{theorem}
\label{bounded component}
Let $\mathcal C$ be a bounded connected component of the set $\Sigma$ of the solutions of \eqref{perturbed eigenvalue problem}.
Then, the \hbox{$L$-degree} of its $0$-slice, $\Ldeg(\mathcal C_0)$, is zero.
\end{theorem}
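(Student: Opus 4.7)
The plan is to apply the Generalized Homotopy Invariance (Lemma~\ref{Generalized Homotopy Invariance}) to the continuous map
\[
\Phi\colon \R\per\R\per\S \to \R^k, \quad (s,\l,\v)\mapsto L\v-\l\v+sN(\v),
\]
whose zero set is $\Sigma$. The idea is to isolate $\mathcal C$ by a bounded open set $W$ in the ambient space, sweep the parameter $s$ across $\R$, and observe that the degree along the $s$-slices is simultaneously constant in $s$ and eventually zero, hence always zero; at $s=0$ this common value will be $\Ldeg(\mathcal C_0)$.

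The first (and main) step is to produce an open isolating neighborhood $W$ of $\mathcal C$ in $\R\per\R\per\S$, i.e.\ an open set with $\overline W$ compact, $W\cap\Sigma=\mathcal C$ and $\partial W\cap\Sigma=\emptyset$. Since $\mathcal C$ is a bounded component of the closed set $\Sigma$ in the locally compact space $\R\per\R\per\S$, it is compact, and a classical point-set-topology fact (in the same spirit as the Kuratowski-type result underlying Lemma~\ref{Whyburn}) grants the existence of such a $W$. With $W$ at hand, $\Phi^{-1}(\0)\cap W=\Sigma\cap W=\mathcal C$ is compact, so Lemma~\ref{Generalized Homotopy Invariance}, applied with $\mathcal J=\R$ to the restriction of $\Phi$ to $W$, shows that the function $s\mapsto \deg_w(\Phi_s,W_s,\0)$ is constant on $\R$.

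To identify the constant value, note that $\overline W$ compact forces its projection on the $s$-axis to be bounded, so $W_s=\emptyset$ for $|s|$ sufficiently large; for such $s$ the preimage $\Phi_s^{-1}(\0)\cap W_s$ is empty and the corresponding degree vanishes. Specialising the equality at $s=0$, we have $\Phi_0=\Psil$ and
\[
W_0\cap\Psil^{-1}(\0)=W_0\cap\Sigma_0=\mathcal C_0,
\]
so $W_0$ is an isolating open neighborhood of $\mathcal C_0$ among $\Psil^{-1}(\0)$. By Definition~\ref{definition of L-degree} one then obtains $\Ldeg(\mathcal C_0)=\deg_w(\Phi_0,W_0,\0)=0$, proving the theorem. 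All steps after the construction of $W$ are immediate consequences of properties of the extended Brouwer degree already recorded in Section~\ref{Preliminaries}, so the only genuinely non-routine ingredient I anticipate is the topological fact used to produce the isolating neighborhood $W$ in Step~1.
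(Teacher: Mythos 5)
The degree-theoretic core of your argument --- constancy of $s\mapsto\deg_w(\Phi_s,W_s,\0)$ via Lemma~\ref{Generalized Homotopy Invariance}, vanishing for large $|s|$ because $W_s=\emptyset$, identification with $\Ldeg(\mathcal C_0)$ at $s=0$ --- is sound, and it is exactly the engine inside the paper's Theorem~\ref{continuation 1}. The gap is where you yourself flagged it: Step~1. The ``classical point-set-topology fact'' you invoke is false. A compact connected component of a closed subset of a locally compact metric space need not admit an isolating neighborhood: in $\R$ take the closed set $\{0\}\cup\{1/n : n =1,2,\dots\}$; the component $\{0\}$ is compact, yet any open $W\ni 0$ whose boundary misses the set must contain an interval around $0$ and hence infinitely many of the points $1/n$, so $W$ cannot cut out $\{0\}$ alone. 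Since $\Phi$ is merely continuous, $\Sigma=\Phi^{-1}(\0)$ can exhibit exactly this behaviour, so the desired $W$ with $W\cap\Sigma=\mathcal C$ need not exist. What the Kuratowski-type Lemma~\ref{Whyburn} actually provides is a dichotomy: either every compact subset of $\Sigma$ containing $\mathcal C$ has nonempty boundary --- forcing an unbounded connected set to emanate from $\mathcal C$, which is impossible for a bounded component --- or $\mathcal C$ is contained in some compact \emph{clopen} subset $K$ of $\Sigma$, which in general strictly contains $\mathcal C$ (it is a union of several components).

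Even after this correction your scheme only isolates such a $K$, and the homotopy argument then yields $\Ldeg(K_0)=0$, where $K_0$ may strictly contain $\mathcal C_0$: the other components of $\Sigma$ lying in $K$ can contribute trivial solutions whose $L$-degrees cancel against $\Ldeg(\mathcal C_0)$, so the theorem does not follow. The repair is to excise the unwanted trivial solutions \emph{before} applying the connectivity lemma, i.e.\ to work in the open set obtained from $\R\per\R\per\S$ by deleting $\{0\}\per(\Sigma_0\setminus \mathcal C_0)$ --- legitimate because $\mathcal C_0$ is clopen in the compact set $\Sigma_0$, as noted just before the theorem. This is precisely what the paper does: assuming $\Ldeg(\mathcal C_0)\neq 0$, it isolates $\mathcal C_0$ among $\Sigma_0$ by an open $U\subset\R\per\S$ and applies Corollary~\ref{continuation 2} (which packages Lemma~\ref{Whyburn} together with your homotopy computation) to produce a connected set of nontrivial solutions inside $\mathcal C$ that is either unbounded or reaches $\{0\}\per\Sigma_0$ outside $\{0\}\per U$; both alternatives are impossible for a bounded component with $\mathcal C_0\subseteq U$. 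So your route is morally the paper's, but it skips the one genuinely delicate topological step.
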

\begin{proof}
If $\mathcal C_0$ is empty, the assertion is true.
If this is not the case, assume, by contradiction, that $\Ldeg(\mathcal C_0) \not=0$, and let $U$ be an open subset of $\R\per\S$ which isolates $\mathcal C_0$ among $\Sigma_0$.
Then, by definition of \hbox{$L$-degree}, we get $\deg(\Psil,U,\0) = \Ldeg(\mathcal C_0) \not=0$.
Therefore, recalling that the closure of a connected set is connected, because of Corollary \ref{continuation 2} the compact component $\mathcal C$ contains a connected set of nontrivial solutions of \eqref{perturbed eigenvalue problem} whose closure meets $\{0\}\per\Sigma_0$ outside $\{0\}\per U$.
This contradicts the assumption that $U$ contains $\mathcal C_0$, and the assertion follows.
\end{proof}

In \cite{BeCaFuPe-s3} we conjectured that if $z_*$ is a trivial solution of \eqref{perturbed eigenvalue problem} corresponding to a simple eigenvalue $\l_*$ of $L$, then the connected component of $\Sigma$ containing $z_*$, if bounded, includes a trivial solution $z^*$ corresponding to an eigenvalue $\l^*$ different from $\l_*$.
Theorem \ref{more than ex conjecture} below, which is our main result, provides more than a positive answer to our conjecture: what counts to get the assertion is that the algebraic multiplicity of $\l_*$ is odd, no matter what is its geometric multiplicity.

\begin{theorem}
\label{more than ex conjecture}
Let $\v_*$ be a unit eigenvector of $L$ corresponding to an eigenvalue $\l_*$ with odd algebraic multiplicity.
Then, in the set $\Sigma$ of the solution triples of \eqref{perturbed eigenvalue problem}, the connected component containing $(0,\l_*,\v_*)$ is either unbounded or includes a trivial solution $(0,\l^*,\v^*)$ with $\l^* \not= \l_*$.
\end{theorem}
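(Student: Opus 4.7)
The plan is to argue by contradiction: assume the connected component $\mathcal{C}$ of $\Sigma$ containing $z_* = (0,\l_*,\v_*)$ is bounded \emph{and} contains no trivial solution $(0,\l^*,\v^*)$ with $\l^* \neq \l_*$, and derive a contradiction with Theorem \ref{bounded component}. As pointed out just before that theorem, the $0$-slice $\mathcal{C}_0$ is clopen in $\Sigma_0$, and under the standing hypothesis it is contained in the single eigenset $\{\l_*\} \per \mathcal{S}_{\l_*}$. The whole task then reduces to showing that $\Ldeg(\mathcal{C}_0) \neq 0$.

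Next I would split into two cases according to the geometric multiplicity of $\l_*$. If it is at least $2$, then $\mathcal{S}_{\l_*}$ is a sphere of positive dimension, hence $\{\l_*\} \per \mathcal{S}_{\l_*}$ is connected and is itself a connected component of $\Sigma_0$; by clopenness $\mathcal{C}_0 = \{\l_*\} \per \mathcal{S}_{\l_*}$, and the final assertion of Theorem \ref{degree associated to (a,b)} identifies $\Ldeg(\mathcal{C}_0)$ with the sign-jump of $\Pl$ at $\l_*$. If the geometric multiplicity is $1$, then $\mathcal{S}_{\l_*} = \{\v_*,-\v_*\}$, the two eigenpoints $(\l_*,\pm\v_*)$ are isolated in $\Sigma_0$, and $\mathcal{C}_0$ is either $\{(\l_*,\v_*)\}$ or $\{(\l_*,\v_*),(\l_*,-\v_*)\}$; applying Theorem \ref{L-degree isolated eigenpoint} to each point and using additivity, $\Ldeg(\mathcal{C}_0)$ equals either one-half of, or the full, sign-jump of $\Pl$ at $\l_*$.

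To close the argument I would invoke the elementary fact that a real polynomial whose root $\l_*$ has odd algebraic multiplicity $m$ must change sign across $\l_*$: writing $\Pl(\l) = (\l-\l_*)^m Q(\l)$ with $Q(\l_*) \neq 0$, the values $\Pl(\l_*-\e)$ and $\Pl(\l_*+\e)$ have opposite signs for sufficiently small $\e > 0$, so the sign-jump of $\Pl$ at $\l_*$ equals $\pm 2$. Each of the possible values of $\Ldeg(\mathcal{C}_0)$ listed above is then a nonzero integer, contradicting Theorem \ref{bounded component} and proving the theorem. The main conceptual obstacle has already been absorbed into the preceding sections: translating the topological invariant $\Ldeg$ into the purely algebraic sign-jump of $\Pl$, so that the odd-multiplicity hypothesis on $\l_*$ can be brought to bear; once that bridge is in place, what remains is only a short combinatorial case analysis on the structure of $\Sigma_0$ near the $\l_*$-eigenset.
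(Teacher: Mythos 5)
Your proposal is correct and follows essentially the same route as the paper: assume $\mathcal C$ is bounded, observe $\mathcal C_0\subseteq\{\l_*\}\per\mathcal S_{\l_*}$, compute $\Ldeg(\mathcal C_0)$ via Theorems \ref{degree associated to (a,b)} and \ref{L-degree isolated eigenpoint} as (half of or all of) the sign-jump of $\Pl$ at $\l_*$, which is $\pm 2$ by oddness of the algebraic multiplicity, contradicting Theorem \ref{bounded component}. The only differences are cosmetic — you organize the cases by geometric multiplicity rather than by whether $\mathcal C_0$ exhausts the eigenset, and you spell out the elementary factorization argument for the nonvanishing of the sign-jump, which the paper leaves implicit.
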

\begin{proof}
We may suppose that the connected component $\mathcal C$ of $\Sigma$ containing the trivial solution $(0,\l_*,\v_*)$ is bounded.

Thus, we have to show that the slice $\mathcal C_0$ of $\mathcal C$ has at least one eigenpoint which does not belong to the eigenset $\{\l_*\}\per{\mathcal S_{\l_*}}$.
Assuming, by contradiction, $\mathcal C_0 \subseteq \{\l_*\}\per{\mathcal S_{\l_*}}$,
we have two possibilities:
\begin{itemize}
\item
$\mathcal C_0$ coincides with $\{\l_*\}\per{\mathcal S_{\l_*}}$;
\item
$\mathcal C_0$ is strictly contained in $\{\l_*\}\per{\mathcal S_{\l_*}}$ (which is possible only when $\{\l_*\}\per{\mathcal S_{\l_*}}$ is disconnected, and therefore made up of two twin brothers).
\end{itemize}

We claim that none of the alternatives may occur, since, because of the oddness of the algebraic multiplicity of $\l_*$, in both cases the \hbox{$L$-degree} of $\mathcal C_0$ would be nonzero, contradicting Theorem \ref{bounded component}.
In fact, in the first case, because of Theorem \ref{degree associated to (a,b)}, the \hbox{$L$-degree} of $\mathcal C_0$ would be either $2$ or $-2$; while, in the second alternative, $\mathcal C_0$ would coincide with the singleton $\{(\l_*,\v_*)\}$ and, according to Theorem \ref{L-degree isolated eigenpoint}, the \hbox{$L$-degree} would be $1$ or $-1$.

The conclusion is that $\mathcal C_0 \not\subseteq \{\l_*\}\per{\mathcal S_{\l_*}}$, and the assertion is established.
\end{proof}

\section{Examples}
\label{Examples}

In this last section we will present some examples illustrating the assertion of Theorem \ref{more than ex conjecture}.
In particular, Example \ref{example5} will show that, in this theorem, the assumption that the algebraic multiplicity of the eigenvalue $\l_*$ is odd cannot be removed.

For the sake of simplicity, we consider only perturbed eigenvalue problems in which the dimension of the space $\R^k$ is ``very low'' ($1$, $2$ or $3$).
Some of these problems concern linear equations, and in any case it is possible to find explicitly the set of solutions.
We start from the five examples in \cite{BeCaFuPe-s3} - the paper in which Conjecture \ref{conjecture} was formulated - and we reinterpret all of them in the new light of the results of this work.
Furthermore, we add Example \ref{example6}, in which the multiplicity of the eigenvalue $\l_*$ is three.

As in the previous sections, $\Sigma$ and $\mathcal E$ will denote, respectively, the set of solutions and the set of eigenpairs of the given problem.

\medskip
The first example concerns a linear problem in $\R^2$.
Here the operator $L$ has two simple real eigenvalues, and the trivial solutions are four: two for each eigenvalue.
The set of solution triples $\Sigma$ is a smooth curve, diffeomorphic to a circle, which contains all the trivial solutions, and the projection of $\Sigma$ onto $\mathcal E$ is a double covering map.
\begin{example}
\label{example1}
In $\R^2$, consider the problem
\begin{equation}
\label{problem in E1}
\left\{
\begin{array}{rcc}
x_1 - {s} x_2 \eql \l x_1,\\
-x_2 + {s}x_1 \eql \l x_2,\\[0pt]
x_1^2\!+x_2^2 \eql 1.
\end{array}\right.
\end{equation}
Here both $L$ and $N$ are linear: respectively,
\[
L \colon (x_1,x_2) \mapsto (x_1,-x_2) \mbox{ and } N \colon (x_1,x_2) \mapsto (-x_2,x_1).
\]
The operator $L$ has two simple eigenvalues, $\l_* = -1$ and $\l^* = 1$, with two corresponding pairs of antipodal unit eigenvectors:
\[
\pm \v_*= \pm(0,1) \quad \text{and} \quad \pm \v^*= \pm(1,0).
\]
A simple computation (compare also with Remark \ref{linear case}) shows that the set $\mathcal E$ of the eigenpairs of \eqref{problem in E1} is the unit circle $s^2+\l^2 = 1$ in the $s\l$-plane.
So, $\mathcal E$ can be parametrized as $(\sin t, \cos t)$, with $t \in [0,2\pi]$.

Now, to find the set $\Sigma$ of the solution triples, observe that for any fixed $t \in [0,2\pi]$ the kernel of the linear operator
\[
L + (\sin{t})N -(\cos{t})I
\]
is a straight line in the $(x_1,x_2)$-plane which meets the unit sphere $\S$ in the pair of antipodal unit eigenvectors
\[
\pm (\cos (t/2),\sin (t/2)).
\]
Therefore, $\Sigma$ is a bounded subset of $\R\per\R\per\R^2$ which can be expressed parametrically as
\[
\g \colon [0,4\pi] \to \R\per\R\per\R^2, \quad
\g(t) = \Big(\sin t,\cos t,\big(\cos (t/2),\sin (t/2)\big)\Big),
\]
that is, $\Sigma$ is a simple, regular, closed curve
which meets all the four trivial solutions of \eqref{problem in E1} for $t = 0, \pi, 2\pi, 3\pi$.
Incidentally, we observe that the projection of $\Sigma$ onto $\mathcal E$ is a double covering map, and the above parametrization $\g$ of $\Sigma$ is the lifting of the curve
\[
\s\colon [0,4\pi] \to \mathcal E, \quad \s(t) = (\sin t, \cos t),
\]
with the initial condition $\g(0) = \big(0,1,(1,0)\big)$.

Finally, observe that the \hbox{$L$-degree} of the slice $\Sigma_0$ of $\Sigma$ is zero. In fact, $\Sigma_0$ is made up of four isolated eigenpoints and, as shown by Lemma \ref{L-degree simple eigenpoint}, two of them have \hbox{$L$-degree} $1$ and the others have \hbox{$L$-degree} $-1$.
This agrees with Theorem \ref{bounded component}.
Moreover, the bounded and connected set $\Sigma$ satisfies the assertion of Theorem \ref{more than ex conjecture}.
\end{example}

\smallskip
The next example concerns a linear problem in $\R^2$ which is very similar to the previous one; in fact, the unperturbed problem is the same, while the perturbing operator $N$ differs from the one in Example \ref{example1} only for a sign in the first component.
In spite of this, the set of solutions is drastically different: it is composed of four unbounded components, each of them containing only one trivial solution.
\begin{example}
\label{example2}
In $\R^2$, consider the problem
\begin{equation}
\label{problem in E2}
\left\{
\begin{array}{rcc}
x_1 + {s}x_2 \eql \l x_1,\\
-x_2 + {s}x_1 \eql \l x_2,\\[0pt]
x_1^2\!+x_2^2 \eql 1.
\end{array}\right.
\end{equation}
As in the previous example, both $L$ and $N$ are linear: respectively,
\[
L \colon (x_1,x_2) \mapsto (x_1,-x_2) \mbox{ and } N \colon (x_1,x_2) \mapsto (x_2,x_1).
\]
The operator $L$ is the same as in Example \ref{example1}. So, in particular, the four trivial solutions of \eqref{problem in E2} coincide with those of \eqref{problem in E1}; namely,
\[
\big(0,-1,(0,\pm 1)\big)
\quad \text{and} \quad
\big(0,1,(\pm 1,0)\big).
\]
Here, on the other hand,
the set $\mathcal E$ of the eigenpairs of \eqref{problem in E2} is unbounded: it is the hyperbola $\l^2 - s^2 = 1$ in the $s\l$-plane.
The two branches of $\mathcal E$, the lower and the upper one, can be represented parametrically as
\[
({s}_*(t),\l_*(t)) = (\sinh t,-\cosh t)
\quad \text{and} \quad
(s^*(t),\l^*(t)) = (\sinh t,\cosh t),
\]
with $t \in \R$.

To find $\Sigma$, let us first consider the lower branch of $\mathcal E$.
For any fixed $t \in \R$, the kernel of the linear operator
\[
L + {s}_*(t)N -\l_*(t)I
\]
is the straight line containing the pair of opposite (not necessarily unit) vectors
\[
\pm \mathbf w_*(t) = \pm (-\sinh t,1+\cosh t)
\]
and, similarly, concerning the upper branch, the kernel of
\[
L + s^*(t)N -\l^*(t)I
\]
is the straight line containing
\[
\pm \mathbf w^*(t) = \pm (1+\cosh t,\sinh t).
\]
Thus, for example, a parametrization of the component of $\Sigma$ containing the trivial solution $\big(0,1,(1,0)\big)$ is
\[
t \in \R \mapsto \big(\sinh t,\cosh t, \v^*(t)\big),
\]
where $\v^*(t) = \mathbf w^*(t)/\|\mathbf w^*(t)\|$.
In this way we find that $\Sigma$ is made up of four components: the other three can be parametrized in an analogous way.

Obviously, each of these components satisfies the assertion of Theorem \ref{more than ex conjecture}.

In conclusion, the set $\Sigma$ of the solution triples has four unbounded connected components, each of them diffeomorphic to $\R$ and containing one and only one trivial solution, being $1$ or $-1$ the \hbox{$L$-degree} of the corresponding eigenpoint.
This shows that in Theorem \ref{bounded component} the assumption that the component $\mathcal C$ is bounded cannot be removed.
\end{example}

\smallskip
In the following example the space is again $\R^2$ and $N$ is nonlinear (actually, is constant).
Here, as in Examples \ref{example1} and \ref{example2}, $L$ has two different real eigenvalues, so that the trivial solutions are four.
The set $\Sigma$ is the union of a topological circle $\mathcal C$ and two straight lines.
In spite of the fact that $\mathcal C$ contains only two trivial solutions with the \emph{same} eigenvalue, $\Sigma$ connects all the four trivial solutions, compatibly with Theorem~\ref{more than ex conjecture}.
\begin{example}
\label{example3}
In $\R^2$, consider the problem
\begin{equation}
\label{problem in E3}
\left\{
\begin{array}{lcc}
x_1 + s\eql \l x_1,\\
2x_2 \eql \l x_2,\\[0pt]
x_1^2\!+x_2^2 \eql 1.
\end{array}\right.
\end{equation}
Here we have
\[
L \colon (x_1,x_2) \mapsto (x_1,2x_2) \mbox{ and } N \colon (x_1,x_2) \mapsto (1,0).
\]
The operator $L$ has two simple eigenvalues, $\l_* = 1$ and $\l^* = 2$, with corresponding unit eigenvectors, $\pm \v_*= \pm(1,0)$ and $\pm \v^*= \pm(0,1)$.
So, the four trivial solutions of \eqref{problem in E3} are
\[
\big(0,1,(\pm 1,0)\big)
\quad \text{and} \quad
\big(0,2,(0,\pm 1)\big).
\]

Regarding $\Sigma$, notice that any solution $\big(s,\l,(x_1,x_2)\big)$ of \eqref{problem in E3} must verify either $x_2=0$ (and, consequently, $x_1=\pm1$) or $\l=2$ (and, therefore, $x_1={s}$ with $|{s}| \le 1$).
In the first case, with $x_2=0$, we get two straight lines in $\R\per\R\per\R^2$:
\[
\ell_1=\big\{\big(s,1-s,(-1,0)\big): s\in \R \big\}
\quad
\text{and}
\quad
\ell_2=\big\{\big(s,1+s,(1,0)\big): s\in \R \big\},
\]
lying in the two different planes of equations $(x_1,x_2)=(-1,0)$ and $(x_1,x_2)=(1,0)$, respectively.
In the second case, we observe that the set of solutions having $\l=2$ can be represented as follows:
\begin{equation*}
\label{smcurve}
\mathcal C=\big\{\big(\sin t,2,(\sin t,\cos t)\big): t \in [0,2\pi] \big\}.
\end{equation*}

The set $\mathcal C$ is diffeomorphic to a circle and contains two - and only two - of the four trivial solutions, but both with the same eigenvalue $\l^*=2$.
If $\mathcal C$ was a connected component of $\Sigma$, this would be in contrast with Theorem \ref{more than ex conjecture}. However, $\Sigma$, which is the union of $\ell_1$, $\ell_2$ and $\mathcal C$, is connected, since
\[
\ell_1 \cap\mathcal C=\big\{\big(-1,2,(-1,0)\big)\big\} \quad
\text{and} \quad \ell_2 \cap\mathcal C=\big\{\big(1,2,(1,0)\big)\big\}.
\]
Therefore, the connected component of $\Sigma$ containing any of the four trivial solutions is $\Sigma$ itself, which satisfies the assertion of Theorem~\ref{more than ex conjecture}.
\end{example}

\smallskip
The simplest example that one can conceive is when the space is one-dimensional.
In this case the unit sphere $\S$ is $\{-1,1\}$ and, whatever is $N$, the set $\Sigma$ consists of two unbounded connected components.

\begin{example}
\label{example4}
Let $\l_* \in \R$ be given and, in $\R$, consider the problem
\begin{equation}
\label{problem in E4}
\left\{
\begin{array}{rcc}
\l_* x + {s}N(x) \eql \l x,\\[0pt]
x \eql \pm 1,
\end{array}\right.
\end{equation}
where $N\colon \{-1,1\} \to \R$ is arbitrary.
Given any $s\in \R$, one has two solutions:
\[
(s, \l_* + sN(1),1) \quad \text{and} \quad (s, \l_* - {s}N(-1),-1).
\]
Thus $\Sigma$ is composed of two straight lines:
\[
\big\{\big(s, \l_* + {s}N(1),1\big): s\in \R \big\}
\quad
\text{and}
\quad
\big\{\big(s, \l_* - {s}N(-1),-1\big): s\in \R \big\},
\]
contained in two different planes of $\R^3$.
This is compatible with Theorem~\ref{more than ex conjecture}.
\end{example}

\smallskip
The following example in $\R^3$ shows that, in Theorem \ref{more than ex conjecture}, the assumption that the algebraic multiplicity of the eigenvalue $\l_*$ is odd cannot be omitted.
\begin{example}
\label{example5}
In $\R^3$, consider the problem
\begin{equation}
\label{problem in E5}
\left\{
\begin{array}{rcc}
{s}x_2 \eql \l x_1,\\
2x_1 - {s}x_1 \eql \l x_2,\\
2x_3 + {s}x_1 \eql \l x_3,\\
x_1^2 + x_2^2 + x_3^2 \eql 1.
\end{array}\right.
\end{equation}
Here both $L$ and $N$ are linear: respectively,
\[
L \colon (x_1,x_2,x_3) \mapsto (0,2x_1,2x_3) \mbox{ and } N \colon (x_1,x_2,x_3) \mapsto (x_2,-x_1,x_1).
\]
The operator $L$ has two real eigenvalues: $\l_* = 0$, with geometric multiplicity $1$ and algebraic multiplicity $2$; and $\l^*= 2$, which is simple.
So, \eqref{problem in E5} has four trivial solutions given by
\[
\big(0,0,(0,\pm 1,0)\big)
\quad \text{and} \quad
\big(0,2,(0,0,\pm 1)\big).
\]
It is not difficult to show (compare with Remark \ref{linear case}) that the set $\mathcal E$ of the eigenpairs has two connected components: the circle $({s}-1)^2+\l^2 = 1$ and the straight line $\l=2$.
In particular, notice that the disconnected set $\mathcal E$ is unbounded, as it should be, according to Theorem \ref{odd dimension}.
Consequently, $\Sigma$ is as well unbounded and disconnected.

Now, let us investigate the solution set $\Sigma$.
To this purpose, observe that the bounded connected component of $\mathcal E$ can be parametrized as
\[
(s(t),\l(t)) = (1-\cos{t},\sin{t}), \quad \text{with} \quad t \in [0,2\pi].
\]
For any fixed $t \in [0,2\pi]$, the kernel of the linear operator
\[
L + {s}(t)N -\l(t)I
\]
is spanned by the (nonzero) vector
\[
\mathbf w(t) = \big(\sin(t/2),\cos(t/2),c(t)\big),
\]
where $c(t)$ is defined by $2c(t) + {s}(t)\sin(t/2) = \l(t)c(t)$, in order to satisfy the third equation of \eqref{problem in E5}.
This implies that the connected component $\mathcal C$ of $\Sigma$ containing the trivial solution $z_*= \big(0,0,(0,1,0)\big)$ can be parametrized as follows:
\[
\s \colon [0,4\pi] \to \R\per\R\per S^2, \quad
\s(t) = \big(1-\cos{t},\sin{t},\v(t)\big),
\]
where $\v(t) = \mathbf w(t)/\|\mathbf w(t)\|$.
That is, $\mathcal C$ is diffeomorphic to $S^1$ - in particular, $\mathcal C$ is bounded - and contains both the trivial solutions corresponding to the eigenvalue $\l_*=0$: $\big(0,0,(0,1,0)\big)$ for $t=0$ (or, equivalently, for $t=4\pi$) and $\big(0,0,(0,-1,0)\big)$ for $t=2\pi$.
Notice that the other two trivial solutions, the ones corresponding to $\l^* = 2$, do not belong to $\mathcal C$.
Therefore, the slice $\mathcal C_0$ at $s=0$ is the $\l_*$-eigenset of $L$.

Since the algebraic multiplicity of $\l_*$ is even, the sign-jump at $\l_*$ of the characteristic polynomial of $L$ is zero.
Consequently, due to Theorem \ref{degree associated to (a,b)}, the \hbox{$L$-degree} of the $\l_*$-eigenset is zero, which shows that the bounded component $\mathcal C$ satisfies the statement of Theorem \ref{bounded component}.

However, $\mathcal C$ does \emph{not} satisfy the assertion of Theorem \ref{more than ex conjecture}. Hence, in this result, the assumption that the algebraic multiplicity of $\l_*$ is \emph{odd} cannot be removed.

The other two components of $\Sigma$, the ones containing the trivial solutions corresponding to the simple eigenvalue $\l^*=2$, do satisfy the assertion of Theorem~\ref{more than ex conjecture}.
In fact, they are the straight lines
\[
\big\{\big(s,2,(0,0,1)\big): s\in \R\big\}
\quad \text{and} \quad
\big\{\big(s,2,(0,0,-1)\big): s\in \R\big\}.
\]

Finally, we observe that the projection of $\mathcal C$ onto the circle $(s-1)^2+\l^2 = 1$ is a double covering map, and the above parametrization $\s$ of $\mathcal C$ is the lifting of the curve $t \in [0,4\pi] \mapsto (1-\cos{t},\sin{t})$ with initial condition $\s(0) = z_* = \big(0,0,(0,1,0)\big)$.
\end{example}

We close with an example in $\R^3$ in which the linear operator $L$ has an eigenvalue $\l_*$ with algebraic multiplicity three and geometric multiplicity two. The $\l_*$-eigensphere is one-dimensional and contains two bifurcation points.

\begin{example}
\label{example6}
In $\R^3$, consider the problem
\begin{equation}
\label{problem in E6}
\left\{
\begin{array}{rcc}
x_1+x_3+{s}x_1 \eql \l x_1,\\
x_2 - {s}x_3 \eql \l x_2,\\
x_3 + {s}x_2 \eql \l x_3,\\
x_1^2 + x_2^2 + x_3^2 \eql 1.
\end{array}\right.
\end{equation}
Here one has
\[
L\colon (x_1,x_2,x_3) \mapsto (x_1+x_3,x_2,x_3) \quad \text{and} \quad
N\colon (x_1,x_2,x_3) \mapsto (x_1,-x_3,x_2).
\]
The operator $L$ has only one eigenvalue: $\l_* = 1$, with algebraic multiplicity $3$ and geometric multiplicity $2$.
Applying Remark \ref{linear case} we get
\[
\mathcal E = \big\{(s,\l) \in \R^2:
(1-\l+s)\big((1-\l)^2+s^2\big)=0\big\}.
\]
Therefore, $\mathcal E$ is the straight line in the $s\l$-plane of equation $\l = 1+s$.
This line includes the unique trivial eigenpair $(0,\l_*) = (0,1)$ corresponding to the eigensphere
\[
\mathcal S_{\l_*} = \big\{(x_1,x_2,x_3) \in \R^3: x_1^2+x_2^2 = 1,\; x_3=0\big\}.
\]
One can check that $\Sigma$ contains the straight lines
\[
\Sigma_- = \big\{\big(s,1+s,(-1,0,0)\big)\in \R\per\R\per\R^3: s \in \R\big\}
\]
and
\[
\Sigma_+ = \big\{\big(s,1+s,(1,0,0)\big)\in \R\per\R\per\R^3: s \in \R\big\}.
\]
Consequently, $\mathcal S_{\l_*}$, which is the unit circle in the plane $x_3 = 0$, contains two (aliases of) bifurcation points:
$(-1,0,0)$ and $(1,0,0)$.

Notice that $\Sigma$ is the union of three connected sets:
$\Sigma_-$, $\Sigma_+$, and the circle
\[
\{0\}\per\{1\}\per\mathcal S_{\l_*} \subset \R\per\R\per\R^3,
\]
which intersects both $\Sigma_-$ and $\Sigma_+$.
Thus, $\Sigma$ is connected and, being unbounded, the assertion of Theorem \ref{more than ex conjecture} is satisfied.
\end{example}



\end{document}